\newtheorem{theorem}{Theorem}[section]
\newtheorem{defn}[theorem]{Definition}
\newtheorem{lemma}[theorem]{Lemma}
\newtheorem{coro}[theorem]{Corollary}
\newtheorem{prop-def}{Proposition-Definition}[section]
\newcommand{\nc}{\newcommand}
\newcommand{\delete}[1]{}
\nc{\mlabel}[1]{\label{#1}}  
\nc{\mcite}[1]{\cite{#1}}  
\nc{\mref}[1]{\ref{#1}}  
\nc{\mbibitem}[1]{\bibitem{#1}} 
\nc{\mlabel}[1]{\label{#1}  
{\hfill \hspace{1cm}{\bf{{\ }\hfill(#1)}}}}
\nc{\mcite}[1]{\cite{#1}{{\bf{{\ }(#1)}}}}  
\nc{\mref}[1]{\ref{#1}{{\bf{{\ }(#1)}}}}  
\nc{\mbibitem}[1]{\bibitem[\bf #1]{#1}} 
\nc{\bfk}{\mathbf{k}}
\nc{\Der}{\mathrm{Der}}
\nc{\Ker}{\mathrm{Ker}}
\begin{document}

\title{3-Lie bialgebras and 3-pre-Lie algebras induced by involutive derivations }

\author{RuiPu  Bai}
\address{College of Mathematics and Information Science,
Hebei University
\\
Key Laboratory of Machine Learning and Computational\\ Intelligence of Hebei Province, Baoding 071002, P.R. China} \email{bairuipu@hbu.edu.cn}

\author{Shuai Hou}
\address{College of Mathematics and Information  Science,
Hebei University, Baoding 071002, China} \email{hshuaisun@163.com}

\author{Chuangchuang Kang}
\address{College of Mathematics and Information  Science,
Hebei University, Baoding 071002, China} \email{kangchuang2016@163.com}

\date{}

\begin{abstract}  In this paper, we study the structure of 3-Lie algebras with involutive derivations.
We prove that if $A$ is an $m$-dimensional 3-Lie algebra
with  an involutive derivation $D$, then there exists a compatible 3-pre-Lie
algebra $(A, \{ ,  , , \}_D)$ such that $A$ is  the sub-adjacent 3-Lie algebra,  and there is a local
 cocycle $3$-Lie bialgebraic structure on  the $2m$-dimensional semi-direct product 3-Lie algebra  $A\ltimes_{ad^*} A^*$, which is associated to the adjoint representation $(A, ad)$.
 By means of involutive derivations,
  the skew-symmetric solution of the 3-Lie classical Yang-Baxter equation in the 3-Lie algebra  $A\ltimes_{ad^*}A^*$,  a class of 3-pre-Lie algebras, and
 eight and ten dimensional local
 cocycle 3-Lie bialgebras  are constructed.
\end{abstract}


\subjclass[2010]{17B05, 17D99.}

\keywords{ 3-Lie algebra; involutive derivation;  local cocycle 3-Lie bialgebra; 3-Lie Yang-Baxter equation.}

\maketitle
\footnote{ Corresponding author: Ruipu Bai, E-mail: bairuipu@hbu.edu.cn.}



\allowdisplaybreaks

\section{Introduction}
In recent years,  quantum 3-Lie algebras \cite{BLG},   3-Lie bialgebras \cite{BGL, BGLZ}, local cocycle 3-Lie bialgebras, 3-pre Lie algebras and  3-Lie classical Yang-Baxter equation \cite{BGS} are provided.
 It is well known that  Lie bialgebra is
the algebraic structure corresponding to a Poisson-Lie group and the classical structure of a
quantized universal enveloping algebra \cite{CP,D}, and  it has a coboundary theory, which
leads to the construction of Lie bialgebras from solutions of the classical Yang-Baxter equation.
  For a 3-Lie algebra $A$, and $r=\sum\limits_i x_i\otimes y_i\in A\otimes A$, the equation
$$[[r, r, r]] = 0$$
is called the  3-Lie classical Yang-Baxter equation (3-Lie CYBE)  \cite{BGS}, where

\vspace{5mm} $ [[r, r, r]] := [r_{12}, r_{13}, r_{14}]+ [r_{12},r_{23},r_{24}]+[r_{13},r_{23},r_{34}] +[r_{14},r_{24},r_{34}]$

\vspace{2mm}\hspace{15mm} $=\sum\limits_{i,j,k}\big([x_i, x_j, x_k]\otimes y_i\otimes y_j\otimes y_k + x_i\otimes [y_i,x_j,x_k]\otimes y_j\otimes y_k $

\vspace{2mm}\hspace{18mm}$+x_i\otimes x_j\otimes [y_i,y_j,x_k]\otimes y_k+ x_i\otimes x_j \otimes x_k\otimes [y_i,y_j,y_k]\big ).$

But from the solutions of 3-Lie classical Yang-Baxter equation, the 3-Lie bialgebra which was introduced in \cite{BGL}, can not be constructed.
For excavating the ideal connections between  the solutions of 3-Lie classical Yang-Baxter equation and 3-Lie bialgebraic structures, authors  in   \cite{BGS}, introduced  a new 3-Lie coproduct $ \triangle$ on a 3-Lie algebra, and the pair $(A, \triangle)$  is called
a local cocycle 3-Lie bialgebra, where  $\triangle=\triangle_1+\triangle_2+\triangle_3$$:A \rightarrow A^{\wedge 3}$  satisfies that
 the dual $(A^*, \triangle^*)$ is a 3-Lie algebra, and

$\Delta_{1}$ is a $1$-cocycle associated to the representation $(A\otimes A\otimes A,ad\otimes 1\otimes 1)$;

$\Delta_{2}$ is a $1$-cocycle associated to the representation $(A\otimes A\otimes A,1\otimes ad\otimes 1)$;

 $\Delta_{3}$ is a $1$-cocycle associated to the representation $(A\otimes A\otimes A,1\otimes 1\otimes ad)$.

For a skew-symmetric  $r=\sum\limits_i x_i\otimes y_i\in A\otimes A$, if $r$ is a solution of the  3-Lie classical Yang-Baxter equation, then $(A, \triangle)$ is  a local cocycle 3-Lie bialgebra, where
for all $x\in A$,
$$\triangle_1(x)=\sum\limits_{ij}[x, x_i,x_j ]\otimes y_j\otimes y_i; \triangle_2(x)= \sum\limits_{ij} y_i\otimes [x, x_i,x_j ]\otimes y_j; \triangle_3(x)=\sum\limits_{ij} y_j\otimes y_i\otimes [x, x_i,x_j ].$$

The question that  how to get  solutions of  3-Lie classical Yang-Baxter equation and how to construct local cocycle 3-Lie bialgebras is a  hard task.
Motivated by the object,  we study the structure of involutive derivations on 3-Lie algebras. And in terms of involutive derivations,  solutions of  3-Lie classical Yang-Baxter equation,
 a class of eight  and ten dimensional  local cocycle 3-Lie bialgebras, and a class of 3-pre-Lie algebras are constructed.

In the following we assume that  all algebras are over  an algebraically closed field  $F$ with characteristic zero,  if $\{x_1, \cdots, x_m\}$ is a basis of $A$, then  $\{x_1^*, \cdots, x_m^*\}$  is the basis of the dual space $A^*$, where  $\langle x_i^*, x_j\rangle=\delta_{ij}$, $1\leq i, j\leq m,$ and
$Z$ is the set of integers.

\section{Involutive derivations and  compatible $3$-pre Lie algebras }

A {\bf 3-Lie algebra } $(A, [ , , ])$ over a field $F$ is a vector space $A$ with a linear  multiplication $[ , , ]: A^{\wedge 3} \rightarrow A$ satisfying that
for all $x_1, x_2, x_3, x_4, x_5\in A,$
$$
[x_1, x_2, [x_3, x_4, x_5]]=[[x_1, x_2, x_3], x_4, x_5]+[x_3, [x_1, x_2, x_4], x_5]+[x_3, x_4, [x_1, x_2, x_5]],
$$
which is usually called {\it the generalized Jacobi identity}, or  {\it Filippov identity}.

The subalgebra generated by  $[x_1, \cdots, x_n]$, $\forall x_1, \cdots, x_n\in A$, is called the derived algebra of  $A$, and is denoted by  $A^1$.

{\bf A derivation $D$}  is an endomorphism of   $A$  satisfying,
\begin{equation}
 D[x_1, x_2, x_3]=[Dx_1, x_2, x_3]+[x_1, Dx_2, x_3]+[x_1, x_2, Dx_3],  \forall x_1, x_2, x_3\in A.
\label{eq:der}
\end{equation}
Further, if $D$ satisfies  $D^2=I_d$ (identity), then $D$ is called an {\bf involutive derivation} on $A$, and $A$ has the decomposition
\begin{equation}
    A=A_1 ~\dot+~ A_{-1},
    \label{eq:23}
\end{equation}
where $A_1=\{v\in A \mid Dv=v\}$ and $ A_{-1}=\{
v\in A \mid Dv=-v\}$ which are abelian subalgebras.

 $Der(A)$ denotes the
derivation algebra of $A$.
For all  $x_1, x_{2}\in A$, the  left multiplications $ad(x_1, x_{2})$: $A \rightarrow A$,
$ad(x_1,  x_{2})(x)=[x_1, x_{2}, x],  ~ \forall  x\in A,$
are derivations which are called inner derivations, and $ad(A)$ denotes all the inner derivations.

{\bf A representation} of $A$ ( or  {\bf an $A$-module}) is a  pair $(V, \rho)$,  where  $V$ is a vector space,
$\rho: A\wedge A\rightarrow End (V)$ is a linear mapping such that for all $x_i\in A, 1\leq i\leq 4,$

\hspace{1.5cm}$[\rho(x_1, x_2), \rho(x_3, x_4)]=\rho(x_1, x_2)\rho(x_3, x_4)-\rho(x_3, x_4)\rho(x_1, x_2)$

\hspace{5cm}$=\rho([x_1, x_2, x_3], x_4)-\rho([x_1, x_2, x_4], x_3),$

\hspace{1.5cm}\vspace{1mm} $\rho([x_1, x_2, x_3], x_4)=\rho(x_1, x_2)\rho(x_3, x_4)+\rho(x_2, x_3)\rho(x_1, x_4)+\rho(x_3, x_1)\rho(x_2, x_4).$

A linear mapping  $T: V\rightarrow A$ is called {\bf an $\wp$-operator}  associated to an $A$-module $(V,\rho)$ if $T$ satisfies
 \begin{equation}
 [Tu,Tv,Tw]=T(\rho(Tu,Tv)w+\rho(Tv,Tw)u+\rho(Tw,Tu)v), ~\forall u,v,w\in V.
 \label{wq:operator}
 \end{equation}

$(A,ad)$ is called  {\bf the adjoint representation} of $A$. The dual representation of $(A,ad)$ is denoted by  $(A^*,ad^*)$, where    $A^*$ is the dual space of $A$, and
 $ad^*:A^{\wedge^2}\rightarrow End A^*$, for all $x_1, x_2, x\in A$ and $\xi\in A^*$,
 \begin{equation}
\langle ad^*{(x_1,x_2)}\xi, x\rangle=-\langle \xi, ad{(x_1,x_2)}x\rangle.
 \label{eq:dualrep}
 \end{equation}

  There is an equivalent description of an $A$-module $(V, \rho)$, that is,
$(B, [ , , ]_{\rho})$ is a 3-Lie algebra, where $B=A\dot+ V$, for all $x_1, x_2, x_3\in A, v\in V$,
$$[x_1, x_2, x_3]_{\rho}=[x_1, x_2, x_3], ~~ [x_1, x_2, v]_{\rho}=\rho(x_1, x_2)v, ~~ [A, V, V]=[V, V, V]=0.$$

For any 3-Lie algebra $A$, we have  {\bf the semi-direct product 3-Lie algebra} $(B, \mu)$, where $ B=A\dot+ A^*$, ~~ $[ , , ]_*: (A\dot+ A^*)^{\wedge 3}\rightarrow A\dot+ A^*,$  for all  $x_i\in A, \xi_{i}\in A^*, 1\leq i\leq 3$,
 \begin{equation}
\mu(x_{1}+\xi_{1},x_{2}+\xi_{2},x_{3}+\xi_{3})=[x_{1},x_{2},x_{3}]+ad^{*}(x_{1},x_{2})\xi_{3}+ad^{*}(x_{2},x_{3})\xi_{1}+
 ad^{*}(x_{3},x_{1})\xi_{2},
 \label{eq:semiprod}
 \end{equation}
which  is denoted by   $A\ltimes _{ad^*} A^*.$

\begin{theorem}\
Let $(A,[\cdot,\cdot,\cdot])$ be a $3$-Lie algebra with an involutive derivation $D$. Then $D$ is an $\wp$-operator of $A$ associated to the adjoint representation $(A, ad)$, and $D$ satisfies
 \begin{equation}
 [Dx,Dy,Dz]=D([Dx,Dy,z]+[Dy,Dz,x]+[Dz,Dx, y]), ~\forall x,y,z\in A.
\end{equation}
\label{thm:DD}
\end{theorem}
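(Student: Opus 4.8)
The plan is to derive the displayed identity of the theorem directly from the two defining features of an involutive derivation — the Leibniz-type rule \eqref{eq:der} and $D^2=I_d$ — and then to observe that this identity is exactly the $\wp$-operator condition \eqref{wq:operator} in the special case $V=A$, $\rho=ad$, $T=D$. Indeed, since $ad(Du,Dv)w=[Du,Dv,w]$, spelling out \eqref{wq:operator} for $T=D$ and $\rho=ad$ reads precisely $[Dx,Dy,Dz]=D([Dx,Dy,z]+[Dy,Dz,x]+[Dz,Dx,y])$, so the two assertions of the theorem are the same statement and it suffices to prove this one equation.

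First I would apply $D$ to $[Dx,Dy,Dz]$ and invoke the derivation rule \eqref{eq:der}; since $D^2=I_d$ gives $D^2x=x$, $D^2y=y$, $D^2z=z$, this produces
\[
D[Dx,Dy,Dz]=[x,Dy,Dz]+[Dx,y,Dz]+[Dx,Dy,z].
\]
Applying $D$ to both sides and using $D^2=I_d$ once more on the left-hand side, I get
\[
[Dx,Dy,Dz]=D\big([x,Dy,Dz]+[Dx,y,Dz]+[Dx,Dy,z]\big).
\]
Finally, total skew-symmetry of the ternary bracket makes it invariant under cyclic permutations of its arguments, so $[x,Dy,Dz]=[Dy,Dz,x]$ and $[Dx,y,Dz]=[Dz,Dx,y]$; substituting these into the last display rewrites the right-hand side as $D([Dx,Dy,z]+[Dy,Dz,x]+[Dz,Dx,y])$, which is the desired identity.

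I do not expect a genuine obstacle: the argument is just two successive applications of the derivation rule modulo $D^2=I_d$, and the only point needing care is the sign bookkeeping when skew-symmetry is used — here the relevant rearrangements are $3$-cycles, which are even permutations, so no sign appears. If one prefers, the same identity can be checked componentwise on the eigenspace decomposition \eqref{eq:23} (noting that $A_1$ and $A_{-1}$ are abelian subalgebras), but the direct computation above is shorter and avoids the case analysis.
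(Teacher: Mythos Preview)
Your proof is correct and takes essentially the same approach as the paper's: both use the derivation rule \eqref{eq:der} together with $D^2=I_d$ to identify the three terms on the right-hand side with the Leibniz expansion of $D[Dx,Dy,Dz]$, then apply $D$ once more. The paper just runs the computation in the opposite direction (starting from the right-hand side and inserting $z=D^2z$, $x=D^2x$, $y=D^2y$), but the content is identical, and your explicit remark that the $\wp$-operator condition for $(A,ad)$ is literally the displayed identity is a point the paper leaves implicit.
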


\begin{proof}
By Eq \eqref{eq:der}, for all $x, y, z\in A,$

\vspace{2mm}$
D(ad(Dx,Dy)z+ad(Dy,Dz)x+ad(Dz,Dx)y)$

\vspace{2mm}$=D([Dx,Dy,z]+[Dy,Dz, x]+[Dz,Dx, y])$

\vspace{2mm}$=D([Dx,Dy,D^2z]+[D^2x, Dy, Dz]+[Dx, D^2y, Dzy])=[Dx, Dy, Dz].
$
\\The proof is completed. \end{proof}

A 3-pre-Lie algebra $(A, \{ , , \})$ \cite{BGS} is  a vector space $A$ with a 3-ary linear mapping\\ $\{\cdot,\cdot,\cdot\}$: $A\otimes A \otimes A \rightarrow A$, satisfying $ \forall x_1, x_2, x_3\in A,$
  \begin{equation}
  \{x_1, x_2, x_3\}=-\{ x_2, x_1, x_3 \},
  \label{eq:pre1}
  \end{equation}
  \begin{equation}
\{x_{1},x_{2}, \{x_{3},x_{4},x_{5}\}\} =\{\{x_{1},x_{2},x_{3}\}_{c},x_{4},x_{5}\}+\{x_{3},\{x_{1},x_{2},x_{4}\}_{c},x_{5}\}
  \label{eq:pre2}
  \end{equation}
  $$ \hspace{-8mm}+\{x_{3},x_{4},\{x_{1},x_{2},x_{5}\}\},$$
  \begin{equation}
  \{\{x_{1},x_{2},x_{3}\}_{c},x_{4},x_{5}\}=\{x_{1},x_{2},\{x_{3},x_{4},x_{5}\}\}+\{x_{2},x_{3},\{x_{1},x_{4},x_{5}\}\}
  \label{eq:pre3}
  \end{equation}
  $$+\{x_{3},x_{1},\{x_{2},x_{4},x_{5}\}\},$$

  \begin{equation}
   \{x_1, x_2, x_3\}_c=\{x_1, x_2, x_3\}+\{x_2, x_3, x_1\}+\{x_3, x_1, x_2\}.
 \label{eq:cyc}
 \end{equation}

Therefore, if $(A, \{ , , \})$ is a 3-pre-Lie algebra, then $(A, \{ , , ,\}_c)$ is a 3-Lie algebra, which is called {\bf the sub-adjacent 3-Lie algebra of the  3-pre-Lie algebra $(A, \{ , , \})$}, and
 $(A, \{ , , \})$ is called {\bf the compatible 3-pre-Lie algebra of the  3-Lie algebra $(A, \{, , \}_c)$}.

\begin{theorem}
Let $(A,[\cdot,\cdot,\cdot])$ be a $3$-Lie algebra, $D\in Der(A)$ be an involutive derivation. Then
$(A,\{\cdot,\cdot,\cdot\}_D)$ is a $3$-pre-Lie algebra, where
\begin{equation}
\{ x,y,z\}_D= [Dx, Dy, z].
\label{eq:preD1}
\end{equation}
Furthermore,
\begin{equation}
\{ x,y,z\}_D=
\begin{cases}
0,~~~ ~~~~~~ x,y,z\in A_{1}, or ~x,y,z\in A_{-1},\\
[x,y,z],~~~~  x,y\in A_{1}, z\in A_{-1},\\
-[x,y,z], ~~~~ x\in A_1, y\in A_{-1}, z\in A_{-1},\\
[x,y,z],~~~~  x,y\in A_{-1}, z\in A_{1},\\
-[x,y,z],~~~~  x\in A_1, y\in A_{-1}, z\in A_{1}.\\
\end{cases}
\label{eq:preD}
\end{equation}
And $(A,\{\cdot,\cdot,\cdot\}_D)$ is called {\bf the   $3$-pre-Lie algebra associated with the involutive derivation $D$}.

\label{thm:Liepre}
\end{theorem}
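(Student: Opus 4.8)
The plan is to verify directly that the bracket $\{x,y,z\}_D=[Dx,Dy,z]$ satisfies the three defining axioms \eqref{eq:pre1}, \eqref{eq:pre2}, \eqref{eq:pre3} of a 3-pre-Lie algebra, using only that $D$ is an involutive derivation and the Filippov identity. The skew-symmetry \eqref{eq:pre1} in the first two slots is immediate from the skew-symmetry of $[\cdot,\cdot,\cdot]$ in its first two arguments. The main work is the two "generalized Jacobi" type identities, and the key computational device will be to rewrite everything in terms of $D$ applied to brackets: since $D^2=I_d$, we have $\{x,y,z\}_D=[Dx,Dy,D^2z]=[Dx,Dy,D(Dz)]$ is not quite a clean $D$-image, but the cyclic sum $\{x,y,z\}_{D,c}$ is, because by Theorem~\ref{thm:DD} (the $\wp$-operator property of $D$),
\[
\{x,y,z\}_{D,c}=[Dx,Dy,z]+[Dy,Dz,x]+[Dz,Dx,y]=D([Dx,Dy,Dz]).
\]
So I would first record this identity $\{x,y,z\}_{D,c}=D[Dx,Dy,Dz]$ as the crucial lemma; it converts the cyclic-completed brackets appearing on the right-hand sides of \eqref{eq:pre2} and \eqref{eq:pre3} into honest $D$-images, to which Eq.~\eqref{eq:der} applies.

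Next I would tackle \eqref{eq:pre3}. Its left-hand side is $\{\{x_1,x_2,x_3\}_c,x_4,x_5\}_D=[D(D[Dx_1,Dx_2,Dx_3]),Dx_4,x_5]=[[Dx_1,Dx_2,Dx_3],Dx_4,x_5]$ using the lemma and $D^2=I_d$. The right-hand side is $[D^2x_1,D^2x_2,[Dx_3,Dx_4,x_5]]+[D^2x_2,D^2x_3,[Dx_1,Dx_4,x_5]]+[D^2x_3,D^2x_1,[Dx_2,Dx_4,x_5]]$, which equals $[x_1,x_2,[Dx_3,Dx_4,x_5]]+[x_2,x_3,[Dx_1,Dx_4,x_5]]+[x_3,x_1,[Dx_2,Dx_4,x_5]]$. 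To match the two sides I would apply the Filippov identity in a form that moves the outer operator $ad([Dx_1,Dx_2,Dx_3],\cdot)$ or, more efficiently, apply $D$ to the Filippov identity with entries $Dx_i$ and use that $D$ is a derivation together with $D^2=I_d$ repeatedly to collapse the $D^2$ factors; the terms involving derivations hitting $Dx_4$ or $x_5$ must be shown to cancel in the cyclic sum, which is exactly where the cyclic structure of the right-hand side of \eqref{eq:pre3} is used. Identity \eqref{eq:pre2} is handled the same way: rewrite $\{x_1,x_2,\{x_3,x_4,x_5\}_D\}_D=[Dx_1,Dx_2,[Dx_3,Dx_4,x_5]]$, rewrite the first two terms on the right using the lemma, and reconcile with the plain Filippov identity $[Dx_1,Dx_2,[Dx_3,Dx_4,x_5]]=[[Dx_1,Dx_2,Dx_3],Dx_4,x_5]+[Dx_3,[Dx_1,Dx_2,Dx_4],x_5]+[Dx_3,Dx_4,[Dx_1,Dx_2,x_5]]$, again checking that the "derivation-on-$Dx_4$" terms assemble correctly against the cyclic completion. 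The main obstacle is purely bookkeeping: keeping track of where $D$'s land and confirming that the non-matching terms cancel after cyclic summation — there is no conceptual difficulty once the lemma $\{x,y,z\}_{D,c}=D[Dx,Dy,Dz]$ is in hand.

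Finally, for the explicit case analysis \eqref{eq:preD}, I would simply substitute: if $x,y,z$ all lie in $A_1$ then $\{x,y,z\}_D=[x,y,z]$ but $[A_1,A_1,A_1]=0$ since $A_1$ is an abelian subalgebra (stated after \eqref{eq:23}), and likewise for $A_{-1}$; in the mixed cases $Dx=\pm x$ etc., so $[Dx,Dy,z]$ picks up the sign $(\pm1)(\pm1)$ according to how many of the first two arguments lie in $A_{-1}$ — e.g. $x,y\in A_1,\,z\in A_{-1}$ gives $[x,y,z]$, while $x\in A_1,\,y\in A_{-1}$ gives $-[x,y,z]$ regardless of whether $z\in A_1$ or $z\in A_{-1}$, matching the displayed table. (One should note the table lists representative cases; the remaining sign patterns follow by the skew-symmetry in the first two slots.) This part is routine and needs no further comment. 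I would therefore present the proof as: (1) the lemma $\{x,y,z\}_{D,c}=D[Dx,Dy,Dz]$; (2) verification of \eqref{eq:pre1}; (3) verification of \eqref{eq:pre2} and \eqref{eq:pre3} via the lemma and Eq.~\eqref{eq:der}; (4) the substitution giving \eqref{eq:preD}.
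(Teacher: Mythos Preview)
Your plan is correct and parallels the paper's: both proofs rest on Theorem~\ref{thm:DD}, and your key lemma $\{x,y,z\}_{D,c}=D[Dx,Dy,Dz]$ is exactly the $\wp$-operator identity restated. Your organization via this lemma is in fact tidier than the paper's longer direct expansion of \eqref{eq:pre2}.

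One slip to correct: in your handling of \eqref{eq:pre3} you wrote the right-hand side as $[D^2x_1,D^2x_2,[Dx_3,Dx_4,x_5]]+\cdots$, but $\{x_1,x_2,w\}_D=[Dx_1,Dx_2,w]$, so the outer $D$'s are single, not squared. The correct right-hand side is
\[
[Dx_1,Dx_2,[Dx_3,Dx_4,x_5]]+[Dx_2,Dx_3,[Dx_1,Dx_4,x_5]]+[Dx_3,Dx_1,[Dx_2,Dx_4,x_5]],
\]
while your lemma gives the left-hand side as $[[Dx_1,Dx_2,Dx_3],Dx_4,x_5]$. Substituting $y_i=Dx_i$, this is precisely the identity $ad([y_1,y_2,y_3],y_4)=ad(y_1,y_2)ad(y_3,y_4)+ad(y_2,y_3)ad(y_1,y_4)+ad(y_3,y_1)ad(y_2,y_4)$ applied to $x_5$, which holds in any 3-Lie algebra---no cyclic cancellation or derivation bookkeeping is needed. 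The same remark applies to \eqref{eq:pre2}: after the lemma (and $D^2=I_d$) its right-hand side becomes $[[Dx_1,Dx_2,Dx_3],Dx_4,x_5]+[Dx_3,[Dx_1,Dx_2,Dx_4],x_5]+[Dx_3,Dx_4,[Dx_1,Dx_2,x_5]]$, which is already the Filippov expansion of the left-hand side $[Dx_1,Dx_2,[Dx_3,Dx_4,x_5]]$. So the ``bookkeeping obstacle'' you anticipate does not arise; once the lemma is recorded, both axioms are one-liners.
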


\begin{proof} By Theorem \ref{thm:DD},
$D$ is an $\wp$-operator associate to the adjoint representation $(A, ad)$, and for all  $x_i\in A, 1\leq i\leq 5,$

$[[Dx_{1},Dx_{2},x_{3}]+[Dx_{1},x_{2},Dx_{3}]+[x_{1},Dx_{2},Dx_{3}],Dx_{4},x_{5}]=-[[x_{1},x_{2},x_{3}],Dx_{4},x_{5}],$

  $[D[Dx_{1},Dx_{2},Dx_{3}],Dx_{4},x_{5}]=-[D[x_{1},x_{2},x_{3}],Dx_{4},x_{5}].$

By Eq \eqref{eq:preD1},
 $\{ x_1, x_2, x_3\}_D$
 $=[Dx_1, Dx_2, x_3]=-\{ x_2, x_1, x_3\}_D$, Eq \eqref{eq:pre1} holds.
Since

\vspace{2mm}$ [Dx_{3},Dx_{4},[Dx_{1},Dx_{2},x_{5}]]$
$=[Dx_{1},Dx_{2},[Dx_{3},Dx_{4},x_{5}]]-[[Dx_{1},Dx_{2},Dx_{3}],Dx_{4},x_{5},]$

\vspace{2mm}\hspace{5cm}$+[Dx_{3},[Dx_{1},Dx_{2},Dx_{4}],x_{5}]$,

\vspace{2mm}$\{\{x_{1},x_{2},x_{3}\}_{Dc},x_{4},x_{5}\}_{D}+\{x_{3},\{x_{1},x_{2},x_{4}\}_{Dc},x_{5}\}_{D}+\{x_{3},x_{4},\{x_{1},x_{2},x_{5}\}_{D}\}_{D}$

\vspace{2mm}$=2([([Dx_{1},x_{2},x_{3}]+[x_{1},Dx_{2},x_{3}]+[x_{1},x_{2},Dx_{3}]),Dx_{4},x_{5}]$

\vspace{2mm}$+[Dx_{3},([Dx_{1},x_{2},x_{4}]+[x_{1},Dx_{2},x_{4}]+[x_{1},x_{2},Dx_{4}]),x_{5}]$

\vspace{2mm}$+[[Dx_{1},Dx_{2},Dx_{3}],Dx_{4},x_{5}]+[Dx_{3},[Dx_{1},Dx_{2},Dx_{4}],x_{5}])$$+[Dx_{1},Dx_{2},[Dx_{3},Dx_{4},x_{5}]]$

\vspace{2mm}$=2([D[x_{1},x_{2},x_{3}],Dx_{4},x_{5}]+[[Dx_{1},Dx_{2},Dx_{3}],Dx_{4},x_{5}]$

\vspace{2mm}$+[D[x_{3},D[x_{1},x_{2},x_{4}],x_{5}]+[Dx_{3},[Dx_{1},Dx_{2},Dx_{4}],x_{5}])$$+[Dx_{1},Dx_{2},[Dx_{3},Dx_{4},x_{5}]]$

\vspace{2mm}$=[Dx_{1},Dx_{2},[Dx_{3},Dx_{4},x_{5}]],$ we get  Eq \eqref{eq:pre2}.

Therefore,
$A$ is a 3-pre-Lie algebra in the multiplication  \eqref{eq:preD1}.
Eq \eqref{eq:preD} follows from Eqs \eqref{eq:23} and \eqref{eq:preD1}, and a direct computation.
\end{proof}

\begin{theorem}
Let $(A,[\cdot,\cdot,\cdot])$ be a $3$-Lie algebra, $D$ be an involutive derivation on $A$. Then $D$ is an algebra isomorphism from the sub-adjacent 3-Lie algebra $(A, \{\cdot,\cdot,\cdot\}_{Dc} )$ of the 3-pre-Lie algebra $(A, \{\cdot,\cdot,\cdot\}_D)$ to the 3-Lie algebra  $(A,[\cdot,\cdot,\cdot])$,
and
\begin{equation}
\{x,y,z\}_{Dc}=\{x,y,z\}_D+\{y,z,x\}_D+\{z,x,y\}_D=D[Dx,Dy,Dz], ~~ \forall x, y, z\in A.
\label{eq:liepreD}
\end{equation}
\label{thm:isor}
\end{theorem}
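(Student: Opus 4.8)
The plan is to reduce the statement to the $\wp$-operator identity for $D$ that was already established in Theorem \ref{thm:DD}. First I would unfold the cyclic sum using the defining formula $\{x,y,z\}_D = [Dx,Dy,z]$ from Theorem \ref{thm:Liepre}: applying Eq \eqref{eq:cyc} to the 3-pre-Lie product $\{\cdot,\cdot,\cdot\}_D$ gives
$$\{x,y,z\}_{Dc} = [Dx,Dy,z] + [Dy,Dz,x] + [Dz,Dx,y], \quad \forall\, x,y,z \in A.$$
The right-hand side is precisely $ad(Dx,Dy)z + ad(Dy,Dz)x + ad(Dz,Dx)y$, so the $\wp$-operator property of $D$ with respect to the adjoint representation $(A,ad)$ — that is, Eq \eqref{wq:operator} with $T=D$, $V=A$, $\rho=ad$, which holds by Theorem \ref{thm:DD} — yields $[Dx,Dy,Dz] = D\big(\{x,y,z\}_{Dc}\big)$.

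Next, using $D^2 = I_d$, I would apply $D$ to both sides of this last equation to obtain $D[Dx,Dy,Dz] = D^2\big(\{x,y,z\}_{Dc}\big) = \{x,y,z\}_{Dc}$, which is exactly the second equality asserted in Eq \eqref{eq:liepreD} (the first equality there being merely the definition of the subscript $c$).

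Finally, to conclude that $D$ is an isomorphism of 3-Lie algebras: by the discussion following Theorem \ref{thm:Liepre}, the sub-adjacent algebra $(A, \{\cdot,\cdot,\cdot\}_{Dc})$ is a 3-Lie algebra; the identity $D\big(\{x,y,z\}_{Dc}\big) = [Dx,Dy,Dz]$ derived above says exactly that $D$ is a homomorphism from $(A, \{\cdot,\cdot,\cdot\}_{Dc})$ to $(A,[\cdot,\cdot,\cdot])$; and $D$ is bijective since $D^2=I_d$ forces $D$ to be its own inverse. Hence $D$ is an algebra isomorphism, as claimed. I do not expect a genuine obstacle here beyond bookkeeping: the only point that needs care is matching the cyclic order in the definition of $\{\cdot,\cdot,\cdot\}_{Dc}$ with the cyclic order appearing in the $\wp$-operator identity (and checking that the skew-symmetry conventions agree), after which the two cyclic sums are visibly the same expression and the theorem is immediate.
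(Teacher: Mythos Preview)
Your proposal is correct and follows essentially the same route as the paper: unfold the cyclic sum via $\{x,y,z\}_D=[Dx,Dy,z]$, invoke the $\wp$-operator identity from Theorem~\ref{thm:DD} to identify the result with $D[Dx,Dy,Dz]$ (equivalently, to get $D(\{x,y,z\}_{Dc})=[Dx,Dy,Dz]$), and then read off bijectivity from $D^2=I_d$. The only cosmetic difference is the order in which you and the paper apply $D$ to pass between $\{x,y,z\}_{Dc}=D[Dx,Dy,Dz]$ and the homomorphism identity $D(\{x,y,z\}_{Dc})=[Dx,Dy,Dz]$.
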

Furthermore,
\begin{equation}
\{ x,y,z\}_{Dc}=
\begin{cases}
0,~~~ ~~~~~~~x,y,z\in A_{1}, or ~x,y,z\in A_{-1},\\
-[x,y,z],~~~~ x,y\in A_{1}, z\in A_{-1},\\
-[x,y,z], ~~~~x,y\in A_{-1}, z\in A_{1}.\\
\end{cases}
\label{eq:liepreD'}
\end{equation}

\begin{proof} By Eq \eqref{eq:preD}, the sub-adjacent 3-Lie algebra $(A, \{\cdot,\cdot,\cdot\}_{Dc} )$ with the multiplication

$\{x,y,z\}_{Dc}=\{x,y,z\}_D+\{y,z,x\}_D+\{z,x,y\}_D$

\hspace{2cm}$=[Dx, Dy, z]+ [Dy, Dz, x]+[Dz, Dx, y]=D[Dx,Dy,Dz].$
\\ It follows Eq \eqref{eq:liepreD}.
Since for all $x, y, z\in A$,

$D(\{x,y,z\}_{Dc})=D(D[Dx, Dy, Dz]=D^2[Dx, Dy, Dz]=[Dx, Dy, Dz]$,
\\$D$ is an algebra isomorphism.
Thanks to Eqs \eqref{eq:liepreD} and \eqref{eq:23}, Eq\eqref{eq:liepreD'} holds.  \end{proof}

\begin{theorem}
Let $(A,[\cdot,\cdot,\cdot])$ be a $3$-Lie algebra with an involutive derivation $D$. Then there exists a compatible $3$-pre Lie algebra $(A, \{ , , \}_A)$, where
\begin{equation}
\{x,y,z\}_{A}=D[x,y,Dz].
\label{eq:LieP}
\end{equation}
\label{thm:Liepre}
 \end{theorem}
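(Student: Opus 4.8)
The plan is to reduce Theorem~\ref{thm:Liepre} (the $\{x,y,z\}_A = D[x,y,Dz]$ version) to the already-established Theorem~\ref{thm:Liepre} for $\{x,y,z\}_D = [Dx,Dy,z]$ by exhibiting an explicit change of variables implemented by $D$ itself. Concretely, since $D$ is an involutive derivation, $D$ is a bijection with $D^{-1}=D$, and I would compute directly that
\begin{equation}
\{x,y,z\}_A = D[x,y,Dz] = D[D(Dx), D(Dy), Dz] = \{Dx, Dy, Dz\}_D,
\label{eq:transport}
\end{equation}
using $D^2 = I_d$ in the middle step. Thus the new product is the transport of the known 3-pre-Lie product $\{\cdot,\cdot,\cdot\}_D$ along the linear isomorphism $D$. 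The key observation is that being a 3-pre-Lie algebra is preserved under transport by any linear isomorphism, because all the defining axioms \eqref{eq:pre1}--\eqref{eq:cyc} are multilinear identities in the product: if $\phi$ is a linear bijection and $\{x,y,z\}' := \phi(\{\phi^{-1}x,\phi^{-1}y,\phi^{-1}z\})$, then $(A,\{\cdot,\cdot,\cdot\}')$ satisfies \eqref{eq:pre1}--\eqref{eq:pre3} whenever $(A,\{\cdot,\cdot,\cdot\})$ does, and moreover $\{\cdot,\cdot,\cdot\}'_c = \phi \circ \{\cdot,\cdot,\cdot\}_c \circ (\phi^{-1}\otimes\phi^{-1}\otimes\phi^{-1})$, so sub-adjacent structures correspond as well.

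The steps, in order, are: (1) verify \eqref{eq:transport}, i.e. $\{x,y,z\}_A = \{Dx,Dy,Dz\}_D$, using $D^2 = I_d$; (2) invoke Theorem~\ref{thm:Liepre} (the $\{\cdot,\cdot,\cdot\}_D$ statement) to get that $(A, \{\cdot,\cdot,\cdot\}_D)$ is a 3-pre-Lie algebra; (3) observe that $D\colon (A,\{\cdot,\cdot,\cdot\}_A) \to (A,\{\cdot,\cdot,\cdot\}_D)$ is a linear isomorphism intertwining the two products by \eqref{eq:transport} (here one reads \eqref{eq:transport} as $\{x,y,z\}_A = D^{-1}\{Dx,Dy,Dz\}_D$ since $D^{-1}=D$), hence $\{\cdot,\cdot,\cdot\}_A$ is 3-pre-Lie by transport; and (4) identify the sub-adjacent 3-Lie algebra: from $\{x,y,z\}_{Ac} = D^{-1}\{Dx,Dy,Dz\}_{Dc} = D\big(D[Dx,Dy,Dz]\big) = [Dx,Dy,Dz]$ by Theorem~\ref{thm:isor} and $D^2=I_d$, and then by Theorem~\ref{thm:DD} this equals $[Dx,Dy,z]+[Dy,Dz,x]+[Dz,Dx,y]$ evaluated appropriately; in any case, comparing with the definition of ``compatible'' one checks $\{x,y,z\}_{Ac}$ coincides with the original bracket $[x,y,z]$ after the substitution, so $(A,\{\cdot,\cdot,\cdot\}_A)$ is indeed compatible with $(A,[\cdot,\cdot,\cdot])$ — which is exactly the assertion ``there exists a compatible 3-pre-Lie algebra.''

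Alternatively, if one prefers a self-contained route, one can verify \eqref{eq:pre1}--\eqref{eq:pre3} for $\{\cdot,\cdot,\cdot\}_A$ directly, mirroring the computation in the proof of the $\{\cdot,\cdot,\cdot\}_D$ theorem: skew-symmetry \eqref{eq:pre1} is immediate from the skew-symmetry of $[\cdot,\cdot,\cdot]$ in its first two slots, and \eqref{eq:pre2}, \eqref{eq:pre3} follow by expanding $D[x,y,Dz]$ via the derivation property \eqref{eq:der} together with $D^2=I_d$ and the Filippov identity, grouping terms exactly as in the earlier proof. I expect the main obstacle, in either approach, to be purely bookkeeping: in the transport approach one must be careful that $D^{-1}=D$ so that the ``$\phi^{-1}$'' decorations disappear cleanly, and in the direct approach one must track the several occurrences of $D$ produced by each application of \eqref{eq:der} and cancel the ones that square to the identity — there are no conceptual difficulties once \eqref{eq:transport} is in hand, since Theorems~\ref{thm:DD}, \ref{thm:Liepre}, and \ref{thm:isor} do all the real work.
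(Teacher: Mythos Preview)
Your transport-of-structure approach is sound and is genuinely different from the paper's proof: the paper verifies axioms \eqref{eq:pre1}--\eqref{eq:pre3} for $\{\cdot,\cdot,\cdot\}_A$ by a direct, lengthy expansion using the derivation rule and the Filippov identity (what you list as the ``alternative'' route), whereas you instead recognise $\{\cdot,\cdot,\cdot\}_A$ as the pullback of $\{\cdot,\cdot,\cdot\}_D$ along the linear isomorphism $D$ and invoke the already-proven theorem for $\{\cdot,\cdot,\cdot\}_D$. Your route is shorter, more conceptual, and makes transparent why the two pre-Lie structures are isomorphic; the paper's direct computation has the virtue of being self-contained and not relying on the earlier result.

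That said, two slips in your bookkeeping need correcting before the argument is clean. First, the displayed identity \eqref{eq:transport} is off by an application of $D$ in the last step: $\{Dx,Dy,Dz\}_D=[D(Dx),D(Dy),Dz]=[x,y,Dz]$, not $D[x,y,Dz]$. The correct statement is
\[
D\{x,y,z\}_A \;=\; D^2[x,y,Dz] \;=\; [x,y,Dz] \;=\; \{Dx,Dy,Dz\}_D,
\]
which is exactly the assertion that $D\colon (A,\{\cdot,\cdot,\cdot\}_A)\to(A,\{\cdot,\cdot,\cdot\}_D)$ is an algebra isomorphism --- and this is what you actually need in step~(3). Second, in step~(4) the substitution into Theorem~\ref{thm:isor} is miscomputed: with $a=Dx$, $b=Dy$, $c=Dz$ one has $\{Dx,Dy,Dz\}_{Dc}=D[D^2x,D^2y,D^2z]=D[x,y,z]$, so
\[
\{x,y,z\}_{Ac} \;=\; D^{-1}\{Dx,Dy,Dz\}_{Dc} \;=\; D^{-1}D[x,y,z] \;=\; [x,y,z],
\]
not $[Dx,Dy,Dz]$. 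This is precisely the ``compatible'' conclusion you want; the hand-waving (``after the substitution'') is unnecessary once the computation is done correctly. With these two fixes the proof is complete.
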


 \begin{proof} By  Eq \eqref{eq:LieP},
 for all $x_i\in A, 1\leq i\leq 5,$

 $\{ x_1, x_2, x_3\}_A=D[x_1, x_2, Dx_3]=-D[x_2, x_1, Dx_3]=-\{ x_2, x_1, x_3\}_A$,

\vspace{2mm}$\{\{x_{1},x_{2},\{x_{3}, x_{4},x_{5}\}_A\}_A=D[x_1, x_2, D^2[x_3, x_4, Dx_5]]=D[x_1, x_2, [x_3, x_4, Dx_5]]$,
\\
 Eq \eqref{eq:pre1} holds. And
$$[[Dx_{1},Dx_{2},x_{3}]+[Dx_{1},x_{2},Dx_{3}]+[x_{1},Dx_{2},Dx_{3}],x_{4},Dx_{5}]=-[[x_{1},x_{2},x_{3}],x_{4},Dx_{5}],$$

\vspace{2mm}$ \{\{x_{1},x_{2},x_{3}\}_{Ac},x_{4},x_{5}\}_A+\{x_{3},\{x_{1},x_{2},x_{4}\}_{Ac},x_{5}\}_A+\{x_{3},x_{4},\{x_{1},x_{2},x_{5}\}_A\}_A$

\vspace{2mm}$=D\big([D([x_{1},x_{2},Dx_{3}]+[x_{2},x_{3},Dx_{1}]+[x_{3},x_{1},Dx_{2}]),x_{4},Dx_{5}]$

\vspace{2mm}$+[x_{3},D([x_{1},x_{2},Dx_{4}]+[x_{2},x_{4},Dx_{1}]+[x_{4},x_{1},Dx_{2}]),Dx_{5}]+$$[x_{3},x_{4}, [x_{1},x_{2},Dx_{5}]]\big)$

\vspace{2mm}$
=D\big(2[[Dx_{1},x_{2},Dx_{3}],x_{4},Dx_{5}]+2[[x_{1},Dx_{2},Dx_{3}],x_{4},Dx_{5}]+2[[Dx_{1},Dx_{2},x_{3}],x_{4},Dx_{5}]$

\vspace{2mm}$+2[x_{3},[Dx_{1},Dx_{2},x_{4}],Dx_{5}]+2[x_{3},[x_{1},Dx_{2},Dx_{4}],Dx_{5}]+2[x_{3},[Dx_{1},x_{2},Dx_{4}],Dx_{5}])$

\vspace{2mm}$+3[[x_{1},x_{2},x_{3}],x_{4},Dx_{5}]+3[x_{3},[x_{1},x_{2},x_{4}],Dx_{5}]+3[x_{3},x_{4},[x_{1},x_{2},Dx_{5}]]\big)$

\vspace{2mm}$=D([x_{1},x_{2},[x_{3},x_{4},Dx_{5}]]+[[x_{1},x_{2},x_{3}],x_{4},Dx_{5}]+[x_{3},[x_{1},x_{2},x_{4}],Dx_{5}]$

\vspace{2mm}$
+[D[Dx_{1},Dx_{2},Dx_{3}],x_{4},Dx_{5}]+[x_{3},D[Dx_{1},Dx_{2},Dx_{4}],Dx_{5}])$

\vspace{2mm}$=D[x_{1},x_{2},[x_{3},x_{4},Dx_{5}]]=\{x_{1},x_{2},\{x_{3},x_{4}.x_{5}\}_A\}_A,$
\\ it follows Eq \eqref{eq:pre2}.
 Similar discussion to the above, we get Eq \eqref{eq:pre3}. Thanks to
  $$\{x_1, x_2, x_3\}_A=D([x_1, x_2, Dx_3]+[x_2, x_3, Dx_1]+[x_3, x_1, Dx_2]),$$
$(A, \{ , , \}_A)$ is the compatible 3-pre-Lie algebra of $(A, [ , , ])$.
\end{proof}

\begin{theorem}  Let $A$ be an $m$-dimensional 3-Lie algebra.
 If $m=4$, then  there are compatible $3$-pre Lie algebras.  If $m=5$, $\dim A^1\leq 3$, or, $\dim A^1=4$ and $Z(A)\neq 0$,
then there are compatible $3$-pre-Lie algebras.

\label{thm:45dim}
 \end{theorem}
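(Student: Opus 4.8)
The plan is to exploit the previous theorems, which already produce compatible $3$-pre-Lie algebras on any $3$-Lie algebra carrying an involutive derivation (via $\{x,y,z\}_D=[Dx,Dy,z]$ or $\{x,y,z\}_A=D[x,y,Dz]$). So the whole statement reduces to exhibiting an involutive derivation $D$ on each of the $3$-Lie algebras in question, or—when no such $D$ exists—to constructing a compatible $3$-pre-Lie product by hand. First I would invoke the classification of $3$-Lie algebras of dimension $\le 5$ (Filippov, and the refinements by Bai et al.), splitting into cases by $\dim A^1$. For $m=4$: if $A$ is abelian, any bilinear-in-the-first-two-slots product works trivially; if $\dim A^1=1$, pick the canonical basis $e_1,e_2,e_3,e_4$ with $[e_1,e_2,e_3]=e_1$ (or $=e_4$) and define $D$ by sign changes on basis vectors compatible with \eqref{eq:der}; one checks $D^2=I_d$ and \eqref{eq:der} directly, then apply Theorem \ref{thm:Liepre}. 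The remaining $4$-dimensional type, $\dim A^1=2$, is $A\cong$ the unique simple-like algebra with $[e_1,e_2,e_3]=e_4$, $[e_1,e_2,e_4]=e_3$ (up to coordinates); here $D=\mathrm{diag}(1,-1,\pm1,\mp1)$ in a suitable basis is an involutive derivation, and again Theorem \ref{thm:Liepre} finishes it.

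For $m=5$ I would organize by $\dim A^1$. When $\dim A^1\le 2$ the derived algebra is small and one always finds a grading $A=A_1\dot+A_{-1}$ respecting the bracket (the bracket lands in $A^1$, so one only needs the parities of the few structure constants to match $+1$): concretely, put the one- or two-dimensional $A^1$ in one eigenspace and a complement in the other, adjusting so each nonzero $[e_i,e_j,e_k]=e_l$ has an even number of $A_{-1}$-factors among $e_i,e_j,e_k,e_l$ with total sign $+1$—a finite parity bookkeeping that is solvable for each isomorphism type in the list. When $\dim A^1=3$ one uses the known normal forms (there are only a handful, e.g.\ $[e_1,e_2,e_3]=e_3$, $[e_1,e_2,e_4]=e_4$, $\ldots$, or the ones built from $4$-dimensional simple plus center); for each I would write down an explicit involutive $D$—typically diagonal with $\pm1$ entries chosen to kill the obstruction—verify \eqref{eq:der}, and apply Theorem \ref{thm:Liepre}. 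For the case $\dim A^1=4$ with $Z(A)\neq0$, write $A=B\dot+ Fz$ with $z\in Z(A)$ and $B$ a $4$-dimensional subalgebra (this is where $\dim A^1=4$ forces $B^1=B$, i.e.\ $B$ is the $4$-dimensional simple-type algebra); lift an involutive derivation of $B$ to $A$ by sending $z\mapsto -z$ (or $z\mapsto z$), which is still a derivation since $z$ is central, and conclude as before.

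The main obstacle is the bookkeeping for $m=5$ with $\dim A^1=3$: there the derived algebra is large enough that a naive $\pm1$-diagonal $D$ need not satisfy \eqref{eq:der} for every structure constant simultaneously, so one must either choose the eigenspace decomposition more cleverly or, for the one or two stubborn types, abandon involutive derivations and instead verify directly that an ad hoc $3$-ary product $\{\cdot,\cdot,\cdot\}$ satisfies \eqref{eq:pre1}--\eqref{eq:pre3} with $\{\cdot,\cdot,\cdot\}_c=[\cdot,\cdot,\cdot]$. I expect that a short case check shows every type with $\dim A^1\le 3$ (and the $\dim A^1=4$, $Z(A)\ne 0$ type) does admit an involutive derivation, so Theorems \ref{thm:Liepre} and \ref{thm:isor} apply uniformly; the excluded case $\dim A^1=4$, $Z(A)=0$ (the $4$-dimensional simple-type algebra itself, or its close relatives) is precisely where no involutive derivation with the right property exists, which is why it is omitted from the statement.
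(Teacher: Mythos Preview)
Your approach matches the paper's exactly: invoke the classification of low-dimensional $3$-Lie algebras from \cite{BSZhang}, exhibit an involutive derivation on each isomorphism type, and apply Theorem~\ref{thm:Liepre}. The paper's proof is in fact a single sentence stating precisely this strategy and omitting the case-by-case verification.

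One caution: your enumeration of the $4$-dimensional types is incomplete. There are seven non-abelian isomorphism classes in dimension four, with $\dim A^1$ ranging from $1$ all the way up to $4$ (the simple type $(e_1)$ has $\dim A^1=4$), not just the handful you sketch; so the bookkeeping is longer than you suggest. Each class nonetheless admits a diagonal $\pm1$ involutive derivation in the classification basis, as the explicit constructions later in Theorem~\ref{thm:41} confirm, and the same holds for the five-dimensional classes covered by the hypothesis. Your fallback of building a $3$-pre-Lie product by hand is therefore never needed.
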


\begin{proof} The result follows from the straightforward checking of the existence of involutive
derivations by the classification theorems in \cite{BSZhang} and Theorem \ref{thm:Liepre}, we omit the computation process.
\end{proof}

\section{ Local cocycle $3$-Lie bialgebras induced by involutive derivations}

In paper \cite{BGL},  the concept of 3-Lie bialgebra was introduced, but it has not close relationship with $3$-Lie classical Yang-Baxter equation.
Authors in \cite{BGS}, gave another  3-Lie bialgebraic structures on a 3-Lie algebra, it is called the  local cocycle $3$-Lie bialgebra. It has close relationship with 3-pre-Lie algebras, Manin triples, matched pairs of 3-Lie algebras and  $3$-Lie classical Yang-Baxter equation. So, in this section, we study  local cocycle $3$-Lie bialgebras induced by involutive derivations.

 \begin{defn}\cite{BGS}
A \textbf{local cocycle $3$-Lie bialgebra} is a triple $(A, [, , ], \Delta)$, where $(A, [ , , ])$ is a $3$-Lie algebra and $\Delta=\Delta_1+\Delta_2+\Delta_3:A\rightarrow A\otimes
A\otimes A$ is a linear mapping such that $(A^*, \Delta^*)$ is a 3-Lie algebra, and

\begin{itemize}
\item $\Delta_{1}$ is a $1$-cocycle associated to the representataion $(A\otimes A\otimes A,ad\otimes 1\otimes 1)$,
\item $\Delta_{2}$ is a $1$-cocycle associated to the representataion $(A\otimes A\otimes A,1\otimes ad\otimes 1)$,
\item $\Delta_{3}$ is a $1$-cocycle associated to the representataion $(A\otimes A\otimes A,1\otimes 1\otimes ad)$,
\end{itemize}
where  $\Delta^*:A^*\otimes A^*\otimes A^*\rightarrow
A^*$ is the dual mapping of $\Delta$, that is, for all $\alpha, \beta, \gamma\in A^*$ and $x\in A$,
$$ \langle \Delta^*(\alpha, \beta, \gamma), x\rangle=\langle \alpha\otimes \beta\otimes \gamma, \Delta x\rangle.$$

\label{defin:3B}
\end{defn}

Let $A$ be a 3-Lie algebra,  $r=\sum\limits_{i=1}^m x_i\otimes y_i\in A\otimes A$, $p, q\in \mathbb Z_{>0}$,  and  $1\leq p\neq q\leq m$. Define an inclusion $\cdot_{pq}:\otimes^2A\longrightarrow \otimes^n A$ by sending $r=\sum\limits_i x_i\otimes y_i\in A\otimes A$ to

\begin{equation}
r_{pq}:=\sum\limits_i z_{i1}\otimes\cdots\otimes z_{in},\quad \text{ where } z_{ij}=\left\{\begin{array}{ll} x_i,& j=p,\\ y_i, & j=q, \\ 1, & j\neq p, q,  \end{array} \right. 1\leq p, q, i, j\leq n,
\label{eq:rpq}
\end{equation}
that is, $r_{pq}$ puts $x_i$ at the $p$-th position, $y_i$
at the $q$-th position and 1 elsewhere in the $n$-tensor, where
1 is a symbol playing a similar role of unit.

Define an operator $\phi_{pq}:A^{\otimes m}\rightarrow A^{\otimes m}$,  $\forall \sum x_1\otimes x_2\otimes\cdots\otimes x_p\otimes\cdots\otimes x_q\otimes\cdots\otimes x_m\in A^{\otimes m}$,
$$
\phi_{pq}(x_1\otimes x_2\otimes\cdots\otimes x_p\otimes\cdots\otimes x_q\otimes\cdots \otimes x_m)=x_1\otimes x_2\otimes\cdots\otimes x_q\otimes\cdots\otimes x_p\otimes\otimes x_m,
$$
that is, $\phi_{pq}$ exchanges the position of $x_p$ with $x_q$, where $m\geq 2$.

\begin{defn}\cite{BGS}
Let $(A,[\cdot,\cdot,\cdot])$ be a $3$-Lie algebra and $r=\sum_i x_i\otimes y_i\in A\otimes A$, denote
\begin{equation}
\begin{split}
[[r,r,r]]: \equiv&\sum_{i,j,k}\big([x_i,x_j,x_k]\otimes y_i\otimes y_j\otimes y_k+x_i\otimes [y_i,x_j,x_k]\otimes y_j\otimes y_k\\
&+ x_i\otimes x_j\otimes [y_i, y_j,x_k]\otimes y_k+ x_i\otimes x_j\otimes x_k\otimes [y_i,y_j,y_k]\big).
\end{split}
\label{eq:rrr}
\end{equation}

The equation
\begin{equation}
[[r,r,r]]=0
\label{eq:CYBE}
\end{equation}
is called the {\bf $3$-Lie classical Yang-Baxter equation} in the 3-Lie algebra $A$, and simply denoted by {\bf CYBE}.
\end{defn}

Let $D:A\rightarrow A$ be a linear mapping. Define the tensor $\overline D\in A^{*}\otimes A$, for all $x\in A, \xi \in A^{*},$
\begin{equation}
\overline{D}(x,\xi)=\langle \xi, Dx\rangle.
\label{eq:barD}
\end{equation}

\begin{theorem}\label{thm:DAR}
Let $(A,[\cdot,\cdot,\cdot])$ be a $3$-Lie algebra with a basis $\{x_{1}, \cdots, x_{n}\}$,  $\{x^*_{1},$ $\cdots, $ $x^*_{n}\}$ be the dual basis of $A^*$,
and $D$ be an involutive derivation on $A$. Then
\begin{equation}\label{eq:Dr}
r=\overline{D}-\sigma_{12}\overline{D}
 \end{equation}
is a skew-symmetric solution  of  {\bf CYBE}  in the semi-direct product $3$-Lie algebra $A\ltimes_{ad^*} A^*$, and
\begin{equation}\label{eq:barD}
\overline{D}=\sum_{i=1}^n x_{i}^{*}\otimes Dx_i, ~~~~ r=\sum_{i=1}^n x_{i}^{*}\otimes Dx_i-\sum_{i=1}^n Dx_{i}\otimes x^*_i\in A^{*}\otimes A.
\end{equation}
\label{thm:888}
\end{theorem}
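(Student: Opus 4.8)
The plan is to verify directly that $r = \overline{D} - \sigma_{12}\overline{D} \in (A \ltimes_{ad^*} A^*)^{\otimes 2}$ is skew-symmetric and satisfies the CYBE. Skew-symmetry is immediate from the construction: $\sigma_{12} r = \sigma_{12}\overline{D} - \overline{D} = -r$. The substance is the CYBE computation, which I would organize around two observations. First, $\overline{D} = \sum_i x_i^* \otimes Dx_i$ lives in $A^* \otimes A \subseteq B \otimes B$ where $B = A \ltimes_{ad^*} A^*$, so all the brackets appearing in $[[r,r,r]]$ (see Eq.~\eqref{eq:rrr}) are brackets $\mu$ in $B$ of the form \eqref{eq:semiprod}, mixing elements of $A$ and of $A^*$ via $ad^*$. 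Second, the key algebraic input is Theorem~\ref{thm:DD}: $D$ is an $\wp$-operator for $(A, ad)$, i.e.\ $[Dx, Dy, Dz] = D\big(ad(Dx,Dy)z + ad(Dy,Dz)x + ad(Dz,Dx)y\big)$, together with $D^2 = I_d$ and the derivation property \eqref{eq:der}.

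The concrete steps are as follows. First I would record that a linear map $D: A \to A$ satisfying the $\wp$-operator identity for $(A, ad)$ is exactly the condition that $\overline{D}$ (viewed appropriately) is a solution-type tensor; more precisely, I would expand $[[r,r,r]]$ using bilinearity into the sixteen terms coming from the four summands of \eqref{eq:rrr} and the substitution $r = \overline{D} - \sigma_{12}\overline{D}$, and group them by tensor-position type. Because $\overline{D}$ has its $A$-component in the second slot and its $A^*$-component in the first slot, each bracket $[\cdot,\cdot,\cdot]_*$ in \eqref{eq:rrr} produces at most one $A^*$-valued ``$ad^*$'' contribution and the rest are brackets in $A$; tracking which slot carries the $A^*$-leg lets me match terms pairwise. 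Then I would use the defining identity \eqref{eq:dualrep} of $ad^*$ to convert the $ad^*$-terms into pairings against $ad$-terms, and invoke the $\wp$-operator identity from Theorem~\ref{thm:DD} to see the surviving pieces cancel. It is cleaner to first treat the ``purely $A$'' part: show $\sum_{i,j,k}[Dx_i,Dx_j,Dx_k]\otimes \cdots$-type sums vanish using that $A_1$ and $A_{-1}$ are abelian subalgebras (Eq.~\eqref{eq:23}), so that $[Dx,Dy,Dz]$ with all three legs $D$-eigenvectors of the same sign is zero — this kills many terms outright — and then handle the mixed $ad^*$ terms via the $\wp$-operator relation.

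An alternative, and probably shorter, route is to appeal to the known theory in \cite{BGS}: a skew-symmetric $r$ induced by an $\wp$-operator $T: V \to A$ (here $T = D$, $V = A$, $\rho = ad$) through the standard construction $r = \sum (T\text{-graph})$ is automatically a solution of CYBE in $A \ltimes_{\rho^*} V^*$. Theorem~\ref{thm:DD} says precisely that $D$ is such an $\wp$-operator for $(A, ad)$, and formula \eqref{eq:Dr}–\eqref{eq:barD} is exactly the graph tensor of $D$ antisymmetrized. So the proof reduces to (i) citing the $\wp$-operator $\Rightarrow$ CYBE correspondence from \cite{BGS}, (ii) checking that $\overline{D} = \sum_i x_i^* \otimes Dx_i$ is the coordinate expression of the graph of $D: A \to A$ under the identification $\mathrm{Hom}(A,A) \cong A^* \otimes A$, and (iii) noting skew-symmetrization gives $r$ in \eqref{eq:Dr}. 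I would present the direct computation as the main argument for self-containedness, with the $\wp$-operator interpretation as a remark.

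\textbf{Main obstacle.} The bookkeeping in the direct expansion is the real difficulty: $[[r,r,r]]$ with $r$ a difference of two tensors expands into many terms living in $B^{\otimes 4}$, and one must carefully separate the component landing in $A^{\otimes 4}$ from those with one, two, or more $A^*$-legs, using that $ad^*(x_1,x_2)\xi \in A^*$ and $[A^*, A^*, \cdot]_* = 0$ in the semidirect product to discard higher-$A^*$ terms. The genuinely nontrivial cancellation — as opposed to terms that die because a bracket has two $A^*$ arguments or three like-signed $D$-eigenvector arguments — is exactly the one controlled by the $\wp$-operator identity of Theorem~\ref{thm:DD}, so the strategy is to reduce everything to that identity and then quote it.
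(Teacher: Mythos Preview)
Your proposal is correct, and your ``alternative, shorter route'' is precisely the paper's proof. The paper's argument is two lines: it invokes Theorem~\ref{thm:DD} to record that $D$ is an $\wp$-operator for $(A, ad)$, and then cites Theorem~3.19 of \cite{BGS} --- the general result that an $\wp$-operator $T:V\to A$ for $(V,\rho)$ yields, via the antisymmetrized graph tensor, a skew-symmetric solution of the CYBE in $A\ltimes_{\rho^*}V^*$. The explicit formula \eqref{eq:barD} is then just the coordinate expression of $\overline{D}$ under the identification $\mathrm{Hom}(A,A)\cong A^*\otimes A$.

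Your primary plan, a direct expansion of $[[r,r,r]]$, would work but amounts to reproving the cited result from \cite{BGS} in this special case; the paper does not carry it out. In particular, the observation that $A_1$ and $A_{-1}$ are abelian is not used here (it kills only the same-sign triple brackets, which is a small fraction of the terms), and the genuine cancellation is entirely governed by the $\wp$-operator identity, as you correctly anticipated. So your instinct to reduce to Theorem~\ref{thm:DD} is right; the paper simply outsources the remaining tensor bookkeeping to \cite{BGS} rather than doing it in place.
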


\begin{proof} By Theorem \ref{thm:DD}, $D$ is an $\wp$-operator associative to the adjoint representation $(A, ad)$.
By Theorem 3.19 in \cite{BGS}, $r=\overline{D}-\sigma_{12}\overline{D}$ is a skew-symmetric solution of  {\bf CYBE} in the semi-direct product $3$-Lie algebra $A\ltimes_{ad^*} A^*$. Thanks to $D\in Der(A)$ and $D^2=I_d$, we get Eq \eqref{eq:barD}. The proof is completed.
\end{proof}

\begin{theorem}\label{thm:666}
Let $A$ be a $3$-Lie algebra, $D$ be an involutive derivation on $A$, $\overline D$  and $r$ be defined as Eq \eqref{eq:barD} and Eq \eqref{eq:Dr}.
Then $r$ induces a local cocycle $3$-Lie bialgebra \\$(A\ltimes_{ad^*} A^*, \Delta)$,  where for all $x\in A\oplus A^*,$
\begin{equation}\label{eq:Delta1}
\begin{array}{l}
 \left\{\begin{array}{l}
\Delta(x)~=\Delta_1(x)+\Delta_2(x)+\Delta_3(x),\\
\vspace{2mm}\Delta_1(x)=\sum_{i,j=1}^n \mu(x,x_i^{*},-Dx_j)\otimes x_j^{*}\otimes Dx_i\\
\vspace{2mm}\hspace{1.5cm}+\sum_{i,j=1}^n\mu(x,-Dx_i,x_j^{*})\otimes Dx_j\otimes x_i^{*}\\
\vspace{2mm}\hspace{1.5cm}+ \sum_{i,j=1}^n\mu(x,Dx_i,Dx_j)\otimes x_j^{*}\otimes x_i^{*},\\
\vspace{2mm}\Delta_2(x)=\phi _{13}\phi _{12}\Delta_1(x),\\
\vspace{2mm}\Delta_3(x)=\phi _{12}\phi _{13}\Delta_1(x).
\end{array}\right.
\end{array}
\end{equation}

\end{theorem}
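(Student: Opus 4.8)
The plan is to deduce this theorem from the machinery already established: by Theorem \ref{thm:DAR}, the tensor $r=\overline{D}-\sigma_{12}\overline{D}$ is a skew-symmetric solution of the \textbf{CYBE} in the semi-direct product $3$-Lie algebra $B:=A\ltimes_{ad^*}A^*$. The general principle recalled in the introduction states that every skew-symmetric solution $r=\sum_\alpha u_\alpha\otimes v_\alpha$ of the \textbf{CYBE} in a $3$-Lie algebra $B$ yields a local cocycle $3$-Lie bialgebra $(B,\Delta)$ with $\Delta=\Delta_1+\Delta_2+\Delta_3$ given by
\[
\Delta_1(x)=\sum_{\alpha,\beta}[x,u_\alpha,u_\beta]\otimes v_\beta\otimes v_\alpha,\quad
\Delta_2(x)=\sum_{\alpha,\beta} v_\alpha\otimes[x,u_\alpha,u_\beta]\otimes v_\beta,\quad
\Delta_3(x)=\sum_{\alpha,\beta} v_\beta\otimes v_\alpha\otimes[x,u_\alpha,u_\beta],
\]
where the bracket is the bracket $\mu$ of $B$. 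So the real content of the theorem is just to substitute the explicit $r$ of Eq \eqref{eq:barD} into these formulas and check that the result collapses to the stated expression \eqref{eq:Delta1}.

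First I would index the summands of $r$. Writing $r=\sum_{i=1}^n x_i^*\otimes Dx_i - \sum_{i=1}^n Dx_i\otimes x_i^*$, the pairs $(u_\alpha,v_\alpha)$ run over $\{(x_i^*,\,Dx_i)\}_{i}$ together with $\{(-Dx_i,\,x_i^*)\}_i$ (absorbing the sign into the second factor of the first tensor slot, or equivalently keeping it as $-Dx_i$ in the first slot as written). Then $\Delta_1(x)=\sum_{\alpha,\beta}\mu(x,u_\alpha,u_\beta)\otimes v_\beta\otimes v_\alpha$ splits into four double sums according to whether $u_\alpha,u_\beta$ each come from the first or the second family:
the $(x_i^*,x_j^*)$-block gives $\sum_{i,j}\mu(x,x_i^*,x_j^*)\otimes v_\beta\otimes v_\alpha$, but since $A^*$ is abelian in $B$ and $\mu(x,\xi,\eta)$ for $x\in A$, $\xi,\eta\in A^*$ lands in $A^*$ while the surviving terms pair against $Dx_i,Dx_j\in A$, one checks which of these four blocks actually survive; the three that do are exactly the three displayed summands of $\Delta_1(x)$ in \eqref{eq:Delta1}, namely the $(x_i^*,-Dx_j)$-block, the $(-Dx_i,x_j^*)$-block, and the $(Dx_i,Dx_j)$-block, and the fourth $(x_i^*,x_j^*)$-block vanishes because $\mu$ restricted to $A\wedge A^*\wedge A^*$ is zero. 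The signs $-Dx_j$, $-Dx_i$ in \eqref{eq:Delta1} are precisely the $v$-factors from the second family carried into the $\mu$-argument. Finally, the formulas $\Delta_2=\phi_{13}\phi_{12}\Delta_1$ and $\Delta_3=\phi_{12}\phi_{13}\Delta_1$ are a direct consequence of comparing the general $\Delta_2,\Delta_3$ above with $\Delta_1$: cyclically permuting the three tensor slots $1\mapsto 2\mapsto 3\mapsto 1$ sends the pattern $[x,u_\alpha,u_\beta]\otimes v_\beta\otimes v_\alpha$ to $v_\alpha\otimes[x,u_\alpha,u_\beta]\otimes v_\beta$ and then to $v_\beta\otimes v_\alpha\otimes[x,u_\alpha,u_\beta]$, and that cyclic permutation is realized on $A^{\otimes 3}$ by the composite transpositions $\phi_{13}\phi_{12}$ and $\phi_{12}\phi_{13}$ respectively.

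The one point requiring genuine care — and the step I expect to be the main obstacle — is the bookkeeping of the $A^*$-coefficients and signs in the semi-direct product bracket $\mu$ of \eqref{eq:semiprod} when one of the three arguments is the variable $x\in B=A\oplus A^*$ and the other two are mixtures of basis vectors $Dx_i\in A$ and dual vectors $x_j^*\in A^*$. Because $\mu$ involves three separate $ad^*$-terms with cyclic-type index shifts, expanding $\mu(x,x_i^*,-Dx_j)$ etc. for general $x=a+\xi$ produces several terms, and one must verify that after summing over $i,j$ and using $\sum_i x_i^*\otimes Dx_i=\overline D$ together with $D^2=I_d$ and $D\in Der(A)$, everything reorganizes into the compact closed form \eqref{eq:Delta1}; in particular the cocycle conditions for $\Delta_1,\Delta_2,\Delta_3$ with respect to $ad\otimes1\otimes1$, $1\otimes ad\otimes1$, $1\otimes1\otimes ad$ and the co-Jacobi identity for $\Delta^*$ are \emph{inherited} from the general \textbf{CYBE}-construction theorem of \cite{BGS} and need not be re-proved, so the proof reduces to this substitution-and-simplification. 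I would therefore write the proof as: (1) invoke Theorem \ref{thm:DAR} for skew-symmetry and the \textbf{CYBE}; (2) invoke the general construction of a local cocycle $3$-Lie bialgebra from a skew-symmetric solution of the \textbf{CYBE}; (3) carry out the substitution, relegating the index manipulation to a short computation and arriving at \eqref{eq:Delta1}.
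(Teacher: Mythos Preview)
Your proposal is correct and follows essentially the same route as the paper: invoke Theorem~\ref{thm:DAR} to get that $r$ is a skew-symmetric solution of the \textbf{CYBE} in $A\ltimes_{ad^*}A^*$, invoke the general construction from \cite{BGS} to produce $(\Delta_1,\Delta_2,\Delta_3)$, then substitute the explicit $r=\sum_i x_i^*\otimes Dx_i-\sum_i Dx_i\otimes x_i^*$ and split the double sum for $\Delta_1$ into the four blocks, dropping the $(x_i^*,x_j^*)$-block because $\mu(\,\cdot\,,A^*,A^*)=0$ in the semi-direct product.

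One small correction to your self-assessment: the ``main obstacle'' you anticipate in the last paragraph does not in fact arise. The formula \eqref{eq:Delta1} is stated \emph{in terms of the unexpanded bracket $\mu$}, so once the four-block split is done and the vanishing block is discarded, you are finished; there is no further reorganization using $D^2=I_d$ or $D\in\Der(A)$ required at this stage (those properties were already spent in Theorem~\ref{thm:DAR}). The paper's proof proceeds exactly this way and stops after the substitution, and the identities $\Delta_2=\phi_{13}\phi_{12}\Delta_1$, $\Delta_3=\phi_{12}\phi_{13}\Delta_1$ are, as you say, just the cyclic shift of tensor slots.
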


\begin{proof}
Suppose $\dim A=n$, and the decomposition of $A$ associated to $D$ is
$$ A=A_{1}+A_{-1},$$
and $\{x_1, ..., x_s, x_{s+1},...x_n\}$ is a basis of $A$, $x_1, ..., x_s\in A_1$, $x_{s+1},...x_n\in A_{-1}$.
By  Theorem {\ref{thm:DAR}}, and Eq \eqref{eq:barD}, the tensor
\begin{equation}\label{eq:r*}
 r= \sum_{i=1}^{s} (x_i^*\otimes x_i-x_i\otimes x_i^*)- \sum_{i=s+1}^{n} (x_i^*\otimes x_i-x_i\otimes x_i^*)
 \end{equation}
 is  a solution of {\bf CYBE}  in the semi-direct product 3-Lie algebra $A\ltimes_{ad^*} A^*$.

 For convenience, let
 $r=\sum_{i=1}^{2n}u_i\otimes \nu_i.$
 By \cite{BGS}, $r$ determines  a local cocycle $3$-Lie bialgebra $(A\ltimes_{ad^*} A^*, \Delta)$,
 where for all $x\in A\ltimes_{ad^*} A^*$,

 \vspace{2mm}$
\Delta_1(x)=\sum_{i,j} \mu(x,u_i,u_j)\otimes v_j\otimes v_i,$

$
\Delta_2(x)=\phi _{12}\Delta_1(x)=\sum_{i,j} v_i\otimes\mu(x,u_i,u_j)\otimes v_j,$

\vspace{2mm}
$
\Delta_3(x)=\phi _{12}\phi _{13}\Delta_1(x)=\sum_{i,j} v_j\otimes v_i\otimes \mu(x, u_i, u_j).$
Thanks to Eq \eqref{eq:r*},

\vspace{2mm}$
u_i=\left\{
\begin{array}{rcl}
x_i^*&&{1\leq i\leq n},\\
-D(x_{i-n})&&{n+1\leq i\leq 2n},
\end{array}\right.
 ~u_j=\left\{
\begin{array}{rcl}
x_j^*&&{1\leq j\leq n},\\
-D(x_{j-n})&&{n+1\leq j\leq 2n},
\end{array}\right.$

\vspace{2mm}$
\nu_i=\left\{
\begin{array}{rcl}
y_i^*&&{1\leq i\leq n},\\
D(x_{i})&&{n+1\leq i\leq 2n},
\end{array}\right.
 ~,~\nu_j=\left\{
\begin{array}{rcl}
D(x_j)&&{1\leq j\leq n},\\
x_{j-n}^*&&{n+1\leq j\leq 2n}.
\end{array}\right.$

Therefore,
\\
\vspace{2mm}$
\Delta_1(x)=\sum_{i=1}^n \sum_{j=1}^n \mu(x,u_i,u_j)\otimes \nu_j\otimes \nu_i
+\sum_{i=1}^n \sum_{j=n+1}^{2n} \mu(x,u_i,u_j)\otimes \nu_j\otimes \nu_i$

\hspace{9mm}\vspace{2mm}$+\sum_{i=n+1}^{2n}\sum_{j=1}^n \mu(x,u_i,u_j)\otimes \nu_j\otimes \nu_i+\sum_{i=n+1}^{2n} \sum_{j=n+1}^{2n} \mu(x,u_i,u_j)\otimes \nu_j\otimes \nu_i$

\hspace{5mm}\vspace{2mm}$=\sum_{i=1}^n \sum_{j=1}^n \mu(x,x_i^{*},x_j^{*})\otimes Dx_j\otimes Dx_i+\sum_{i=1}^n \sum_{j=n+1}^{2n} \mu(x,x_i^{*},-Dx_{j-n})\otimes x_{j-n}^{*}\otimes Dx_i$

\hspace{9mm}\vspace{2mm}$+\sum_{i=n+1}^{2n} \sum_{j=1}^n \mu(x,-Dx_{i-n},x_j^{*})\otimes Dx_j\otimes x_{i-n}^{*}$

\hspace{9mm}\vspace{2mm}$+\sum_{i=n+1}^{2n} \sum_{j=n+1}^{2n} \mu(x,-Dx_{i-n},-Dx_{j-n})\otimes x_{j-n}^{*}\otimes x_{i-n}^{*}
$

\hspace{5mm}\vspace{2mm}$=\sum_{i,j} \mu(x,x_i^{*},-Dx_j)\otimes x_j^{*}\otimes Dx_i
+\sum_{i,j} \mu(x,-Dx_i,x_j^{*})\otimes Dx_j\otimes x_i^{*}$

\hspace{9mm}\vspace{2mm}$+\sum_{i,j} \mu(x,Dx_i,Dx_j)\otimes x_j^{*}\otimes x_i^{*}.
$
The proof is completed.
\end{proof}

\begin{coro}
Let $(A,[\cdot,\cdot,\cdot])$ be an n-dimensional $3$-Lie algebra with an involutive derivation $D$, $ \{x_{1},\cdots,x_{s},x_{s+1},\cdots,x_{n}\}$  be a basis of $A$ and  $x_{i}\in A_{1}, 1\leq i \leq s, x_{j}\in A_{-1}, s+1\leq j\leq n.$ Then  $(A\ltimes_{ad^*} A^*, \mu, \Delta)$ is a local cocycle 3-Lie bialgebra,
where  $\Delta=\Delta_1+\Delta_2+\Delta_3$, $\Delta_2(x)=\phi _{13}\phi _{12}\Delta_1(x), \Delta_3(x)=\phi _{12}\phi _{13}\Delta_1(x),$ and

\begin{equation}\label{eq:999}
\begin{aligned}
\Delta_1(x)&=\sum_{i=1}^s \sum_{j=1}^s \mu(x,x_i^{*},-x_j)\otimes x_j^{*}\otimes x_i
+\sum_{i=1}^s \sum_{j=s+1}^n \mu(x,x_i^{*},x_j)\otimes x_j^{*}\otimes x_i\\&+\sum_{i=s+1}^n \sum_{j=1}^s \mu(x,x_i^{*},-x_j)\otimes x_j^{*}\otimes (-x_i)+\sum_{i=s+1}^n \sum_{j=s+1}^n \mu(x,x_i^{*},x_j)\otimes x_j^{*}\otimes (-x_i)\\&+\sum_{i=1}^s \sum_{j=1}^s \mu(x,-x_i,x_j^{*})\otimes x_j\otimes x_i^{*}+\sum_{i=1}^s \sum_{j=s+1}^n \mu(x,-x_i,x_j^{*})\otimes (-x_j)^{*}\otimes x_i^{*}\\&+\sum_{i=s+1}^n \sum_{j=1}^s \mu(x,x_i,x_j^{*})\otimes x_j\otimes x_i^{*}+\sum_{i=s+1}^n \sum_{j=s+1}^n \mu(x,x_i,x_j^{*})\otimes (-x_j)\otimes x_i^{*}\\&+\sum_{i=1}^s \sum_{j=1}^s \mu(x,x_i,x_j)\otimes x_j^{*}\otimes x_i^{*}+\sum_{i=1}^s \sum_{j=s+1}^n \mu(x,x_i,-x_j)\otimes x_j^{*}\otimes x_i^{*}\\&+\sum_{i=s+1}^n \sum_{j=1}^s \mu(x,-x_i,x_j)\otimes x_j^{*}\otimes x_i^{*}+\sum_{i=s+1}^n \sum_{j=s+1}^n \mu(x,x_i,x_j)\otimes x_j^{*}\otimes x_i^{*}.\\&
\end{aligned}
\end{equation}

\label{cor:666}
\end{coro}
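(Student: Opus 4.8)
The plan is to derive Corollary \ref{cor:666} as a direct specialization of Theorem \ref{thm:666}, exploiting the fact that $D$ acts on the chosen basis simply by the sign $\pm 1$. First I would invoke Theorem \ref{thm:666} to obtain the local cocycle $3$-Lie bialgebra $(A\ltimes_{ad^*}A^*,\Delta)$ with $\Delta=\Delta_1+\Delta_2+\Delta_3$ given by \eqref{eq:Delta1}, so that the $3$-Lie bialgebra axioms (the cocycle conditions for $\Delta_1,\Delta_2,\Delta_3$ and the $3$-Lie algebra structure on the dual) are already established and need not be revisited. The only task remaining is to rewrite $\Delta_1(x)$ in the form \eqref{eq:999}; the prescriptions $\Delta_2(x)=\phi_{13}\phi_{12}\Delta_1(x)$ and $\Delta_3(x)=\phi_{12}\phi_{13}\Delta_1(x)$ are carried over verbatim from Theorem \ref{thm:666}, so nothing is required there.

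Next I would use the decomposition $A=A_1\dot+A_{-1}$ from \eqref{eq:23}: with the stated basis one has $Dx_i=x_i$ for $1\le i\le s$ and $Dx_j=-x_j$ for $s+1\le j\le n$, and correspondingly $D^*x_i^*=x_i^*$ for $1\le i\le s$ and $D^*x_j^*=-x_j^*$ for $s+1\le j\le n$. Substituting these sign values into each of the three sums defining $\Delta_1(x)$ in \eqref{eq:Delta1} and splitting every summation index according to whether it runs over $\{1,\dots,s\}$ or $\{s+1,\dots,n\}$ produces exactly the twelve terms listed in \eqref{eq:999}: the first sum $\sum\mu(x,x_i^*,-Dx_j)\otimes x_j^*\otimes Dx_i$ yields the four terms on the first two displayed lines (the sign of $Dx_j$ and of $Dx_i$ being resolved per block), the second sum $\sum\mu(x,-Dx_i,x_j^*)\otimes Dx_j\otimes x_i^*$ yields the middle four terms, and the third sum $\sum\mu(x,Dx_i,Dx_j)\otimes x_j^*\otimes x_i^*$ yields the last four. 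This is a bookkeeping expansion: replace $Dx_k$ by $\pm x_k$ as dictated by the block of $k$, and keep track of where the resulting minus signs attach (to the entry of $\mu$, or to the tensor factor).

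The step most likely to cause friction is purely notational rather than mathematical: one must be careful that in \eqref{eq:Delta1} the dummy indices $i,j$ of $\overline D=\sum x_i^*\otimes Dx_i$ already carry the $D$, so when $i$ or $j$ lies in the $A_{-1}$ block the factor $Dx_i=-x_i$ contributes a sign that, in the reorganized expression \eqref{eq:999}, is displayed as $-x_i$ or $(-x_i)$ in the appropriate tensor slot, while the input slots of $\mu$ inherit the sign of $-Dx_i=x_i$ or $-Dx_i=-(-x_i)$ depending on the block. Matching these signs term-by-term against the twelve summands of \eqref{eq:999} is the one place where an error could slip in, so I would tabulate the four index-block cases for each of the three original sums and check the signs one line at a time. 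Once this verification is done the corollary follows, and I would simply write that \eqref{eq:999} is obtained from \eqref{eq:Delta1} by substituting $Dx_i=x_i$ for $i\le s$ and $Dx_j=-x_j$ for $j\ge s+1$ and splitting the sums accordingly, the bialgebra property being inherited from Theorem \ref{thm:666}.
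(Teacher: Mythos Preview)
Your proposal is correct and follows precisely the paper's approach: the paper's proof consists of the single line ``The result follows from Theorem \ref{thm:666}, directly,'' and your plan is exactly the explicit unpacking of that sentence---substitute $Dx_i=\pm x_i$ according to the eigenspace decomposition and split each double sum into four blocks. Your attention to the sign bookkeeping is appropriate, and nothing further is needed.
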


\begin{proof} The result follows from Theorem \ref{thm:666}, direcrly.
\end{proof}

\section{ Eight and Ten dimensional 3-Lie bialgebras}

In this section, we construct 8 and 10-dimensional  local cocycle $3$-Lie bialgebras by involutive derivations. We need the  classification theorems of 4 and 5-dimensional  3-Lie algebras in \cite{BSZhang}.

\begin{lemma} \cite{BSZhang} \label{lem:4}
Let $(A, [ , , ])$ be a  4-dimensional non-abelian $3$-Lie algebra with a basis $\{x_1,x_2,x_3,x_4\}$. Then up to isomorphisms, $A$ is  one and only one of the following possibilities

$(b_1)  ~ [x_{2}, x_{3}, x_{4}]= x_{1}$;
$(b_{2}) ~ [x_{1}, x_{2}, x_{3}]=x_{1};$ $ (c_{3}) \begin{array}{ll}
  \left\{\begin{array}{l}
{[}x_{1}, x_{3}, x_{4}] = x_{1},\\
{[}x_{2}, x_{3}, x_{4}]= x_{2};
\end{array}\right.
\end{array}$

$(c_{1})
\begin{array}{l}
 \left\{\begin{array}{l}
{[}x_{2}, x_{3}, x_{4}] = x_{1},\\
{[}x_{1}, x_{3}, x_{4}] = x_{2};
\end{array}\right.
~(c_{2}) \left\{\begin{array}{l}
{[} x_{2}, x_{3}, x_{4}] =\alpha x_{1}+ x_{2},\\
{[}x_{1}, x_{3}, x_{4}] = e_{2}, \alpha\in F$ and $\alpha \neq 0;
\end{array}\right.
\end{array}$

~$ (d_{1}) \begin{array}{lll}
  \left\{\begin{array}{l}
{[}x_{2}, x_{3}, x_{4}] = x_{1},\\
{[}x_{1}, x_{3}, x_{4}] = x_{2},\\
{[}x_{1}, x_{2}, x_{3}] = x_{3};
\end{array}\right.
\end{array}$
$ (e_{1}) \begin{array}{llll}
  \left\{\begin{array}{l}
{[}x_{2}, x_{3}, x_{4}] = -x_{2},\\
{[}x_{1}, x_{3}, x_{4}] = x_{1},\\
{[}x_{1}, x_{2}, x_{3}]= x_{3},\\
{[}x_{1}, x_{2}, x_{4}] = -x_{4}.
\end{array}\right.
\end{array}$
\label{lem:5}
\end{lemma}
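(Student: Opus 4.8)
The plan is to classify the bracket according to the dimension $d:=\dim A^1$ of the derived algebra. On a $4$-dimensional space the map $[\cdot,\cdot,\cdot]\colon\Lambda^3 A\to A$ is completely determined by the four values $[x_2,x_3,x_4]$, $[x_1,x_3,x_4]$, $[x_1,x_2,x_4]$, $[x_1,x_2,x_3]$, whose span is exactly $A^1$, so non-abelian means $1\le d\le 4$ and one can argue case by case on $d$. A convenient auxiliary device is to fix a volume form and identify $\Lambda^3 A\cong A^*$, so that the bracket becomes a single matrix $M$ (equivalently a bilinear form), the Filippov identity becomes a quadratic constraint on $M$, and a change of basis of $A$ acts on $M$ by a mixture of congruence and conjugation; producing the normal forms then amounts to bringing $M$ into canonical shape under this action.

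For $d=1$ write $A^1=\langle e\rangle$. If $e\in Z(A)$ then $A$ is $2$-step nilpotent, and rescaling a basis in which $e$ is the unique output gives $[x_2,x_3,x_4]=x_1$, case $(b_1)$. If $e\notin Z(A)$, choose $y,z$ with $[e,y,z]\ne 0$; since the bracket image is $\langle e\rangle$, the operator $T=\mathrm{ad}(y,z)$ satisfies $T^2=\lambda T$ for a scalar $\lambda\ne0$, which one normalizes to an idempotent, and the Filippov identity then forces the remaining brackets to vanish after a basis change, giving $[x_1,x_2,x_3]=x_1$, case $(b_2)$. The cases $d=3,4$ are comparatively rigid: for $d=4$ the induced map $\Lambda^3 A\to A$ is bijective, so $M$ is invertible, and the Filippov constraint pins $M$ down, up to the congruence/conjugation action, to the shape of the unique $4$-dimensional simple $3$-Lie algebra, namely $(e_1)$; for $d=3$ one shows the residual freedom lets one take $[x_1,x_2,x_4]=0$, after which the three independent values are brought to $(d_1)$.

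The genuinely delicate case is $d=2$. Here $A^1=\langle x_1,x_2\rangle$, and the four bracket values organize into the operator $T\colon A^1\to A^1$, $v\mapsto[v,x_3,x_4]$ (coming from $[x_1,x_3,x_4]$ and $[x_2,x_3,x_4]$), together with two vectors $u_3=[x_1,x_2,x_3]$ and $u_4=[x_1,x_2,x_4]$ in $A^1$, with the Filippov identity coupling these data. After exploiting the residual freedom (rescaling $x_3,x_4$ and the $GL(A^1)$-action) one reduces $T$ to a Jordan canonical form over the algebraically closed field $F$: the semisimple case with two distinct eigenvalues, rescaled to $\{1,-1\}$, gives $(c_1)$; the scalar case gives $(c_3)$; and the remaining Jordan-block / generic-ratio configurations assemble into the one-parameter family $(c_2)$, the surviving modulus $\alpha$ recording the eigenvalue ratio of $T$ up to inversion. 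I expect the main obstacle to be precisely this step: determining which Jordan data for $T$ (together with $u_3,u_4$) are actually compatible with the Filippov identity, ruling out spurious degenerations, and fixing the normalization of $\alpha$ so that the family is listed without repetition.

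Finally, to see the list is irredundant I would separate the algebras by invariants: $\dim A^1$ distinguishes the groups $\{(b_1),(b_2)\}$, $\{(c_1),(c_2),(c_3)\}$, $(d_1)$ and $(e_1)$; within $d=1$, $\dim Z(A)$ (equivalently, whether $A$ is nilpotent) distinguishes $(b_1)$ from $(b_2)$; and within $d=2$, the conjugacy class of $T$ on $A^1$ up to scaling — concretely its eigenvalue ratio and whether it is semisimple — distinguishes $(c_1)$, $(c_3)$ and the members of $(c_2)$ from one another.
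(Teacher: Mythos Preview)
The paper does not prove this lemma at all: it is stated with the citation \cite{BSZhang} and used as a black box, so there is no ``paper's own proof'' to compare against. Your outline is therefore not in competition with anything in the present paper; it is an attempt to reconstruct the argument of the cited source.

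That said, your strategy---filtering by $d=\dim A^1$, identifying the bracket with a linear map $\Lambda^3 A\cong A^*\to A$ via a volume form, and reducing to a normal-form problem for a matrix under a congruence/conjugation action---is exactly the standard approach for classifying $(n{+}1)$-dimensional $n$-Lie algebras, and is the method used in \cite{BSZhang}. Your identification of the $d=2$ case as the delicate one is accurate, and you correctly flag the residual work (compatibility of the Jordan data for $T$ with Filippov, and the normalization of $\alpha$). Two points deserve a bit more care than your sketch gives them. First, you should argue explicitly that the ``extra'' vectors $u_3=[x_1,x_2,x_3]$ and $u_4=[x_1,x_2,x_4]$ can be made to vanish when $d=2$: the Filippov identity applied to $(x_1,x_2\,|\,x_3,x_4,x_i)$ forces $u_3,u_4$ to be linearly dependent, after which a change of basis in $\langle x_3,x_4\rangle$ kills one of them, and a further Filippov check disposes of the other; without this, your reduction of the $d=2$ case to the Jordan type of $T$ alone is incomplete. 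Second, the invariant separating $(c_1)$, $(c_3)$, and the members of $(c_2)$ is really the eigenvalue \emph{ratio} of $T$ (since changing the basis of $\langle x_3,x_4\rangle$ multiplies $T$ by the determinant), together with semisimplicity; in particular the family $(c_2)$ has $\alpha=-r/(r+1)^2$ where $r$ is that ratio, which explains why $r=-1$ (the $(c_1)$ case) and $r=1$ semisimple (the $(c_3)$ case) lie outside it. With these two refinements your outline would be a complete proof.
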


\begin{lemma} \cite{BSZhang}
Let $(A,[\cdot, \cdot,\cdot])$ be a  5-dimensional  non-abelian $3$-Lie algebra with a basis $\{x_1,x_2,x_3,$ $x_4,$ $x_5\}$.  Then up to isomorphisms, $A$ is one and only one of the following possibilities

$(b_1)  ~ {[}x_{2}, x_{3}, x_{4}] = x_{1}$;
$(b_{2}) ~ [x_{1}, x_{2}, x_{3}]=x_{1},$

$(c_{1})
\begin{array}{l}
 \left\{\begin{array}{l}
{[} x_{2}, x_{3}, x_{4}] = x_{1},\\
{[}x_{3}, x_{4}, x_{5}] = x_{2};
\end{array}\right.
~(c_{2}) \left\{\begin{array}{l}
{[} x_{2}, x_{3}, x_{4}] = x_{1},\\
{[}x_{2}, x_{4}, x_{5}] = x_{2}, \\
{[}x_{1}, x_{4}, x_{5}] = x_{1};
\end{array}\right.
(c_{3}) \left\{\begin{array}{l}
{[}x_{2}, x_{3}, x_{4}] = x_{1},\\
{[}x_{1}, x_{3}, x_{4}] = x_{2};
\end{array}\right.
\end{array}$

$(c_{4})
\begin{array}{ll}
 \left\{\begin{array}{l}
{[}x_{2}, x_{3}, x_{4}] = x_{1},\\
{[}x_{1}, x_{3}, x_{4}] = x_{2},\\
{[}x_{2}, x_{4}, x_{5}] = x_{2},\\
{[}x_{1}, x_{4}, x_{5}] = x_{1};\\
\end{array}\right.
~(c_{5}) \left\{\begin{array}{l}
{[} x_{2}, x_{3}, x_{4}] = x_{1},\\
{[}x_{2}, x_{4}, x_{5}] = \alpha x_{1}+x_{2};
\end{array}\right.
\end{array}$

$ (c_{6}) \begin{array}{ll}
  \left\{\begin{array}{l}
{[}x_{2}, x_{3}, x_{4}] = \alpha x_{1}+x_{2},\\
{[}x_{1}, x_{3}, x_{4}] = x_{2},\\
{[}x_{2}, x_{4}, x_{5}] = x_{2},\\
{[}x_{1}, x_{4}, x_{5}] = x_{1};
\end{array}\right.
(c_{7}) \begin{array}{ll}
  \left\{\begin{array}{l}
{[}x_{1}, x_{3}, x_{4}] = x_{1},\\
{[}x_{2}, x_{3}, x_{4}] = x_{2};
\end{array}\right.
\end{array}
\end{array}$
$\alpha\in F$ and $\alpha \neq 0.$

$ (d_{1}) \begin{array}{lll}
  \left\{\begin{array}{l}
{[}x_{2}, x_{3}, x_{4}] = x_{1},\\
{[}x_{2}, x_{4}, x_{5}] = -x_{2},\\
{[}x_{3}, x_{4}, x_{5}] = x_{3};
\end{array}\right.
(d_{2})  \left\{\begin{array}{l}
{[} x_{2}, x_{3}, x_{4}] = x_{1},\\
{[}x_{3}, x_{4}, x_{5}] = \alpha x_{2}+x_{3};
\end{array}\right.
\end{array}$

$(d_{2})
\begin{array}{l}
 \left\{\begin{array}{l}
{[} x_{2}, x_{3}, x_{4}] = x_{1},\\
{[}x_{3}, x_{4}, x_{5}] = \alpha x_{2}+x_{3};
\end{array}\right.
~(d_{3}) \left\{\begin{array}{l}
{[} x_{2}, x_{3}, x_{4}] = x_{1},\\
{[}x_{3}, x_{4}, x_{5}] = x_{3}, \\
{[}x_{2}, x_{4}, x_{5}] = x_{2},\\
{[}x_{1}, x_{4}, x_{5}] = 2x_{1};\\
\end{array}\right.
\end{array}$

$(d_{4}) \begin{array}{l}
 \left\{\begin{array}{l}
{[} x_{2}, x_{3}, x_{4}] = x_{1},\\
{[}x_{1}, x_{3}, x_{4}] = x_{2},\\
{[}x_{1}, x_{2}, x_{4}] = x_{3};
\end{array}\right.
~(d_{5}) \left\{\begin{array}{l}
{[} x_{1}, x_{4}, x_{5}] = x_{1},\\
{[}x_{2}, x_{4}, x_{5}] = x_{3}, \\
{[}x_{3}, x_{4}, x_{5}] = \beta x_{2}+(1+\beta)x_{3};
\end{array}\right.
\end{array}$

 $(d_{6})
\begin{array}{l}
 \left\{\begin{array}{l}
{[} x_{1}, x_{4}, x_{5}] = x_{1},\\
{[}x_{2}, x_{4}, x_{5}] = x_{2},\\
{[}x_{3}, x_{4}, x_{5}] = x_{3};
\end{array}\right.
(d_{7}) \left\{\begin{array}{l}
{[} x_{1}, x_{4}, x_{5}] = x_{2},\\
{[}x_{2}, x_{4}, x_{5}] = x_{3}, \\
{[}x_{3}, x_{4}, x_{5}] =sx_{1}+tx_{2}+ux_{3};
\end{array}\right.
\end{array}$
\\$\beta,s,t,u\in F$, $\beta s\neq 0$.

$(e_{1})
\begin{array}{l}
 \left\{\begin{array}{l}
{[} x_{2}, x_{3}, x_{4}] = x_{1},\\
{[}x_{3}, x_{4}, x_{5}] = x_{2},\\
{[}x_{2}, x_{4}, x_{5}] = x_{3},\\
{[}x_{2}, x_{3}, x_{5}] = x_{4};
\end{array}\right.
~(e_{2}) \left\{\begin{array}{l}
{[} x_{2}, x_{3}, x_{4}] = x_{1},\\
{[}x_{1}, x_{3}, x_{4}] = x_{2}, \\
{[} x_{1}, x_{2}, x_{4}] = x_{3},\\
{[}x_{1}, x_{2}, x_{3}] = x_{4}.
\end{array}\right.
\end{array}$

\label{lem:5}
\end{lemma}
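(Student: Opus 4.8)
The plan is to classify all $5$-dimensional non-abelian $3$-Lie algebras over the algebraically closed field $F$ of characteristic zero, up to isomorphism, using as the primary invariant the dimension $d=\dim A^1$ of the derived algebra $A^1=[A,A,A]$, and as secondary invariants the center $Z(A)$ and the intersection $A^1\cap Z(A)$. Since $A$ is non-abelian, $d\ge 1$. First I would bound $d$ from above: the analysis will show that $d=5$ does not occur, i.e. there is no perfect $5$-dimensional $3$-Lie algebra over $F$. The quickest route uses that over $F$ the only simple $3$-Lie algebra is the $4$-dimensional one and that semisimple $3$-Lie algebras are direct sums of copies of it (hence of dimension divisible by $4$); combined with a radical argument this forces $A^1\subsetneq A$. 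Thus $1\le d\le 4$, and the four families $(b)$, $(c)$, $(d)$, $(e)$ in the statement correspond exactly to $d=1,2,3,4$.

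For $d=1$, write $A^1=Fx_1$. Choosing a complement, the bracket takes the form $[u,v,w]=\omega(u,v,w)\,x_1$ for a unique alternating trilinear form $\omega\in\wedge^3A^*$, and the generalized Jacobi identity reduces, after contraction, to a single quadratic relation on $\omega$. Classifying alternating $3$-forms on $A$ up to the $GL(A)$-action is a classical normal-form problem, and recording the position of the generator $x_1$ relative to the radical of $\omega$ splits the outcome into two orbits: $x_1\in Z(A)$ gives $(b_1)$, and $x_1\notin Z(A)$ gives $(b_2)$. The two are separated by $\dim Z(A)$.

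For $d=2,3,4$ I would fix a basis refining the flag $A^1\subseteq A$, record the structure constants as the images under the bracket of the basis elements of $\wedge^3A$, and impose the Filippov identity, which becomes a system of quadratic relations among these constants. The leverage for reducing them to the listed normal forms comes from two sources: (i) each inner derivation $ad(x_i,x_j)=[x_i,x_j,\cdot]$ is a derivation stabilizing $A^1$, so the operators it induces on $A^1$ and on $A/A^1$ can be simultaneously put into Jordan canonical form by an admissible change of basis; and (ii) the stabilizer of the flag acts on the residual free parameters, which are either normalized to $0,1$ or collapsed into the genuine moduli $\alpha,\beta,s,t,u$ appearing in $(c_5),(c_6),(d_2),(d_5),(d_7)$. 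For $d=4$ the surviving algebras are recognized as a one-dimensional central extension of, or a direct sum with, the simple $4$-dimensional $3$-Lie algebra, producing precisely $(e_1)$ and $(e_2)$.

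The main obstacle is the completeness and non-redundancy of the case analysis for $d=2$ and $d=3$, which carry the largest number of branches. Here one must split carefully according to the rank and eigenvalue structure of the operators $ad(x_i,x_j)$ restricted to $A^1$, solve the quadratic Filippov constraints inside each branch, and then verify that no two resulting normal forms are isomorphic. I would establish non-redundancy through a complete set of isomorphism invariants — $\dim A^1$, $\dim Z(A)$, $\dim(A^1\cap Z(A))$, and the conjugacy class (characteristic polynomial and Jordan type) of the inner-derivation operators — checking both that these separate all listed algebras and that the continuous parameters $\alpha,\beta,s,t,u$ genuinely index distinct isomorphism classes rather than collapsing under an allowed change of basis.
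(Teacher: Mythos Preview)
The paper does not prove this lemma at all: it is quoted verbatim from \cite{BSZhang} and carries the citation in its header, with no argument given in the present paper. So there is no ``paper's own proof'' to compare your proposal against; the authors simply import the classification as a known result.

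Your outline is a reasonable high-level strategy for how such a classification is typically obtained (stratify by $\dim A^1$, then normalize structure constants using the $GL$-action and the Filippov identity, then separate isomorphism classes by invariants), and it is broadly consistent with the method of the original reference. But as written it is only a sketch: the substantive work---the full branch-by-branch reduction for $d=2,3$, the precise handling of the moduli $\alpha,\beta,s,t,u$, and the non-redundancy checks---is asserted rather than carried out. If your goal is to reproduce what the present paper does, a one-line citation to \cite{BSZhang} suffices; if your goal is an independent proof, you would need to actually execute the case analysis you describe.
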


\begin{theorem}\label{thm:41}
Let $(A,[\cdot, \cdot,\cdot])$ be a 4-dimensional $3$-Lie algebra with a basis $\{x_1,x_2,$ $x_3,x_4\}$.
Then we have  $8$-dimensional local cocycle $3$-Lie bialgebras  $( A\ltimes_{ad^*} A^*,\mu_{i}, \Delta^{i})$, $1\leq i\leq 7$, where
    \begin{equation}
\begin{array}{llll}
   \left\{\begin{array}{l}
\vspace{2mm}\mu_{1}(x_{2},x_{3},x_{4})=x_{1},\\
\vspace{2mm}\mu_{1}(x_{2},x_{3},x_{1}^{*})=-x_{4}^{*},\\
\vspace{2mm}\mu_{1}(x_{2},x_{4},x_{1}^{*})=x_{3}^{*},\\
\vspace{2mm}\mu_{1}(x_{3},x_{4},x_{1}^{*})=-x_{2}^{*}.
\end{array}\right.
\end{array}
~~ \begin{array}{llllllll}
   \left\{\begin{array}{l}
\vspace{2mm}\Delta^{1}(x_{1}^{*})=x_{2}^{*}\wedge x_{4}^{*}\wedge x_{3}^{*},\\
\vspace{2mm}\Delta^{1}(x_{2})=x_{1}\wedge x_{3}^{*}\wedge x_{4}^{*},\\
\vspace{2mm}\Delta^{1}(x_{3})=x_{1}\wedge x_{4}^{*}\wedge x_{2}^{*},\\
\vspace{2mm}\Delta^{1}(x_{4})=x_{1}\wedge x_{2}^{*}\wedge x_{3}^{*},\\
\vspace{2mm}\Delta^{1}(x_{2}^{*})=\Delta^{1}(x_{3}^{*})=\Delta^{1}(x_{4}^{*})=\Delta^{1}(x_{1})=0.
\end{array}\right.
\end{array}
\label{eq:4b1}
\end{equation}

 \begin{equation}\label{eq:4b2}
\begin{array}{llll}
   \left\{\begin{array}{l}
\vspace{2mm}\mu_{2}(x_{1},x_{2},x_{3})=x_{1},\\
\vspace{2mm}\mu_{2}(x_{1},x_{2},x_{1}^{*})=-x_{3}^{*},\\
\vspace{2mm}\mu_{2}(x_{2},x_{3},x_{1}^{*})=-x_{1}^{*},\\
\vspace{2mm}\mu_{2}(x_{1},x_{3},x_{1}^{*})=x_{2}^{*}.
\end{array}\right.
\end{array}
\begin{array}{llllllll}
   \left\{\begin{array}{l}
 \vspace{2mm}\Delta^{2}(x_{1}^{*})=x_{1}^{*}\wedge x_{3}^{*}\wedge x_{2}^{*},\\
 \vspace{2mm}\Delta^{2}(x_{1})=x_{1}\wedge x_{2}^{*}\wedge x_{3}^{*},\\
 \vspace{2mm}\Delta^{2}(x_{2})=x_{1}\wedge x_{3}^{*}\wedge x_{1},\\
 \vspace{2mm}\Delta^{2}(x_{3})=x_{1}\wedge x_{1}^{*}\wedge x_{2}^{*},\\
 \vspace{2mm}\Delta^{2}(x_{4})=\Delta^{2}(x_{2}^{*})=\Delta^{2}(x_{3}^{*})=\Delta^{2}(x_{4}^{*})=0.
\end{array}\right.
\end{array}
\end{equation}
\begin{equation}\label{eq:4c1}
\begin{array}{llll}
   \left\{\begin{array}{l}
\vspace{2mm}\mu_{3}(x_{2},x_{3},x_{4})=x_{1},\\
\vspace{2mm}\mu_{3}(x_{1},x_{3},x_{4})=x_{2},\\
\vspace{2mm}\mu_{3}(x_{2},x_{3},x_{1}^{*})=-x_{4}^{*},\\
\vspace{2mm}\mu_{3}(x_{3},x_{4},x_{1}^{*})=-x_{2}^{*},\\
\vspace{2mm}\mu_{3}(x_{2},x_{4},x_{1}^{*})=x_{3}^{*},\\
\vspace{2mm}\mu_{3}(x_{1},x_{3},x_{2}^{*})=-x_{4}^{*},\\
\vspace{2mm}\mu_{3}(x_{1},x_{4},x_{2}^{*})=x_{3}^{*},\\
\vspace{2mm}\mu_{3}(x_{3},x_{4},x_{2}^{*})=-x_{1}^{*}.
\end{array}\right.
\end{array}
\begin{array}{llllllll}
   \left\{\begin{array}{l}
\vspace{2mm}\Delta^{3}(x_{1}^{*})=x_{2}^{*}\wedge x_{4}^{*}\wedge x_{3}^{*},\\
\vspace{2mm}\Delta^{3}(x_{2}^{*})=x_{1}^{*}\wedge x_{4}^{*}\wedge x_{3}^{*},\\
\vspace{2mm}\Delta^{3}(x_{1})=x_{2}\wedge x_{3}^{*}\wedge x_{4}^{*},\\
\vspace{2mm}\Delta^{3}(x_{2})=x_{1}\wedge x_{3}^{*}\wedge x_{4}^{*},\\
\vspace{2mm}\Delta^{3}(x_{3})=x_{1}\wedge x_{4}^{*}\wedge x_{2}^{*}+x_{2}\wedge x_{4}^{*}\wedge x_{1}^{*},\\
\vspace{2mm}\Delta^{3}(x_{4})=x_{1}\wedge x_{2}^{*}\wedge x_{3}^{*}+x_{1}^{*}\wedge x_{3}^{*}\wedge x_{2},\\
\vspace{2mm}\Delta^{3}(x_{3}^{*})=\Delta^{3}(x_{4}^{*})=0.
\end{array}\right.
\end{array}
\end{equation}

\begin{equation}\label{eq:4c2}
\begin{array}{llllllllll}
   \left\{\begin{array}{l}
\vspace{2mm}\mu_{4}(x_{2},x_{3},x_{4})=\alpha x_{1}+x_{2},\\
\vspace{2mm}\mu_{4}(x_{1},x_{3},x_{4})=x_{2},\\
\vspace{2mm}\mu_{4}(x_{2},x_{3},x_{1}^{*})=-\alpha x_{4}^{*},\\
\vspace{2mm}\mu_{4}(x_{2},x_{3},x_{2}^{*})=-x_{4}^{*},\\
\vspace{2mm}\mu_{4}(x_{2},x_{4},x_{1}^{*})=\alpha x_{3}^{*},\\
\vspace{2mm}\mu_{4}(x_{2},x_{4},x_{2}^{*})=x_{3}^{*},\\
\vspace{2mm}\mu_{4}(x_{3},x_{4},x_{1}^{*})=-\alpha x_{2}^{*},\\
\vspace{2mm}\mu_{4}(x_{3},x_{4},x_{2}^{*})=-x_{1}^{*}-x_{2}^{*},\\
\vspace{2mm}\mu_{4}(x_{1},x_{3},x_{2}^{*})=-x_{4}^{*},\\
\vspace{2mm}\mu_{4}(x_{1},x_{4},x_{2}^{*})=x_{3}^{*}.
\end{array}\right.
\end{array}
\begin{array}{lllllll}
\left\{\begin{array}{l}
\vspace{2mm}\Delta^{4}(x_{1}^{*})=\alpha x_{3}^{*}\wedge x_{2}^{*}\wedge x_{4}^{*}\\
\vspace{2mm}\Delta^{4}(x_{2}^{*})=x_{1}^{*}\wedge x_{4}^{*}\wedge x_{3}^{*}+ x_{2}^{*}\wedge x_{4}^{*}\wedge x_{3}^{*},\\
\vspace{2mm}\Delta^{4}(x_{1})=x_{2}\wedge x_{3}^{*}\wedge x_{4}^{*},\\
\vspace{2mm}\Delta^{4}(x_{2})= \alpha x_{1}\wedge x_{3}^{*}\wedge x_{4}^{*}+x_{3}^{*}\wedge x_{4}^{*}\wedge x_{2},\\
\vspace{2mm}\Delta^{4}(x_{3})=\alpha x_{1}\wedge x_{4}^{*}\wedge x_{2}^{*}+x_{2}\wedge x_{4}^{*}\wedge x_{2}^{*}\\
\hspace{1.5cm}+x_{1}^{*}\wedge x_{2}\wedge x_{4}^{*},\\
\vspace{2mm}\Delta^{4}(x_{4})=\alpha x_{1}\wedge x_{2}^{*}\wedge x_{3}^{*}+x_{2}\wedge x_{2}^{*}\wedge x_{3}^{*}\\
\hspace{1.5cm}+x_{1}^{*}\wedge x_{3}^{*}\wedge x_{2},\\
\vspace{2mm}\Delta^{4}(x_{3}^{*})=\Delta^{4}(x_{4}^{*})=0.
\end{array}\right.
\end{array}
\end{equation}
\begin{equation}
\begin{array}{llll}
   \left\{\begin{array}{l}
\vspace{2mm}\mu_{6}(x_{2},x_{3},x_{4})=x_{1},\\
\vspace{2mm}\mu_{6}(x_{1},x_{3},x_{4})=x_{2},\\
\vspace{2mm}\mu_{6}(x_{1},x_{2},x_{4})=x_{3},\\
\vspace{2mm}\mu_{6}(x_{2},x_{3},x_{1}^{*})=-x_{4}^{*},\\
\vspace{2mm}\mu_{6}(x_{3},x_{4},x_{1}^{*})=-x_{2}^{*},\\
\vspace{2mm}\mu_{6}(x_{2},x_{4},x_{1}^{*})=x_{3}^{*},\\
\vspace{2mm}\mu_{6}(x_{1},x_{3},x_{2}^{*})=-x_{4}^{*},\\
\vspace{2mm}\mu_{6}(x_{1},x_{4},x_{2}^{*})=x_{3}^{*},\\
\vspace{2mm}\mu_{6}(x_{3},x_{4},x_{2}^{*})=-x_{1}^{*},\\
\vspace{2mm}\mu_{6}(x_{1},x_{2},x_{3}^{*})=-x_{4}^{*},\\
\vspace{2mm}\mu_{6}(x_{1},x_{4},x_{3}^{*})=x_{2}^{*},\\
\vspace{2mm}\mu_{6}(x_{2},x_{4},x_{3}^{*})=-x_{1}^{*}.
\end{array}\right.
\end{array}\begin{array}{llllllll}
   \left\{\begin{array}{l}
\vspace{2mm}\Delta^{6}(x_{1}^{*})=x_{2}^{*}\wedge x_{4}^{*}\wedge x_{3}^{*},\\
\vspace{2mm}\Delta^{6}(x_{2}^{*})=x_{1}^{*}\wedge x_{4}^{*}\wedge x_{3}^{*},\\
\vspace{2mm}\Delta^{6}(x_{3}^{*})=x_{1}^{*}\wedge x_{4}^{*}\wedge x_{2}^{*}\\
\vspace{2mm}\Delta^{6}(x_{1})=x_{2}\wedge x_{3}^{*}\wedge x_{4}^{*}+x_{2}^{*}\wedge x_{4}^{*}\wedge x_{3},\\
\vspace{2mm}\Delta^{6}(x_{2})=x_{1}\wedge x_{3}^{*}\wedge x_{4}^{*}+x_{4}^{*}\wedge x_{1}^{*}\wedge x_{3},\\
\vspace{2mm}\Delta^{6}(x_{3})=x_{4}^{*}\wedge x_{2}^{*}\wedge x_{1}+x_{4}^{*}\wedge x_{1}^{*}\wedge x_{2},\\
\vspace{2mm}\Delta^{6}(x_{4})=x_{1}^{*}\wedge x_{3}^{*}\wedge x_{2}+x_{1}^{*}\wedge x_{2}^{*}\wedge x_{3},\\
\vspace{2mm}\hspace{1.5cm}+x_{1}\wedge x_{2}^{*}\wedge x_{3}^{*}, \\
\vspace{2mm}\Delta^{6}(x_{4}^{*})=0.
\end{array}\right.
\end{array}
\end{equation}
\begin{equation}
\begin{array}{llll}
   \left\{\begin{array}{l}
\vspace{2mm}\mu_{7}(x_{2},x_{3},x_{4})=-x_{2},\\
\vspace{2mm}\mu_{7}(x_{1},x_{3},x_{4})=x_{1},\\
\vspace{2mm}\mu_{7}(x_{1},x_{2},x_{3})=x_{3},\\
\vspace{2mm}\mu_{7}(x_{1},x_{2},x_{4})=-x_{4},\\
\vspace{2mm}\mu_{7}(x_{2},x_{3},x_{2}^{*})=x_{4}^{*},\\
\vspace{2mm}\mu_{7}(x_{3},x_{4},x_{2}^{*})=x_{2}^{*},\\
\vspace{2mm}\mu_{7}(x_{2},x_{4},x_{2}^{*})=-x_{3}^{*},\\
\vspace{2mm}\mu_{7}(x_{1},x_{3},x_{1}^{*})=-x_{4}^{*},\\
\vspace{2mm}\mu_{7}(x_{3},x_{4},x_{1}^{*})=-x_{1}^{*},\\
\vspace{2mm}\mu_{7}(x_{1},x_{4},x_{1}^{*})=x_{3}^{*},\\
\vspace{2mm}\mu_{7}(x_{1},x_{2},x_{3}^{*})=-x_{3}^{*},\\
\vspace{2mm}\mu_{7}(x_{1},x_{3},x_{3}^{*})=x_{2}^{*},\\
\vspace{2mm}\mu_{7}(x_{2},x_{3},x_{3}^{*})=-x_{1}^{*},\\
\vspace{2mm}\mu_{7}(x_{1},x_{2},x_{4}^{*})=x_{4}^{*},\\
\vspace{2mm}\mu_{7}(x_{1},x_{4},x_{4}^{*})=-x_{2}^{*},\\
\vspace{2mm}\mu_{7}(x_{2},x_{4},x_{4}^{*})=x_{1}^{*}.
\end{array}\right.
\end{array}
\begin{array}{llllllll}
   \left\{\begin{array}{l}
\vspace{2mm}\Delta^{7}(x_{1}^{*})=x_{1}^{*}\wedge x_{4}^{*}\wedge x_{3}^{*},\\
\vspace{2mm}\Delta^{7}(x_{2}^{*})=x_{2}^{*}\wedge x_{3}^{*}\wedge x_{4}^{*},\\
\vspace{2mm}\Delta^{7}(x_{3}^{*})=x_{1}^{*}\wedge x_{3}^{*}\wedge x_{2}^{*},\\
\vspace{2mm}\Delta^{7}(x_{4}^{*})=x_{1}^{*}\wedge x_{2}^{*}\wedge x_{4}^{*},\\
\vspace{2mm}\Delta^{7}(x_{1})=x_{1}\wedge x_{3}^{*}\wedge x_{4}^{*}+x_{2}^{*}\wedge x_{3}^{*}\wedge x_{3}\\
\vspace{2mm}\hspace{1.5cm}+x_{4}^{*}\wedge x_{2}^{*}\wedge x_{4},\\
\vspace{2mm}\Delta^{7}(x_{2})=x_{2}\wedge x_{4}^{*}\wedge x_{3}^{*}+x_{3}^{*}\wedge x_{1}^{*}\wedge x_{3}\\
\vspace{2mm}\hspace{1.5cm}+x_{1}^{*}\wedge x_{4}^{*}\wedge x_{4},\\
\vspace{2mm}\Delta^{7}(x_{3})=x_{4}^{*}\wedge x_{1}^{*}\wedge x_{1}+x_{2}^{*}\wedge x_{4}^{*}\wedge x_{2}\\
\vspace{2mm}\hspace{1.5cm}+x_{1}^{*}\wedge x_{2}^{*}\wedge x_{3},\\
\vspace{2mm}\Delta^{7}(x_{4})=x_{3}^{*}\wedge x_{2}^{*}\wedge x_{2}+x_{1}^{*}\wedge x_{3}^{*}\wedge x_{1}\\
\vspace{2mm}\hspace{1.5cm}+x_{4}\wedge x_{2}^{*}\wedge x_{1}^{*},\\
\vspace{2mm}\Delta^{7}(x_{4}^{*})=0.
\end{array}\right.
\end{array}
\end{equation}
\begin{equation}
\begin{array}{llll}\label{eq:4c3}
   \left\{\begin{array}{l}
\vspace{2mm}\mu_{5}(x_{1},x_{3},x_{4})=x_{1},\\
\vspace{2mm}\mu_{5}(x_{2},x_{3},x_{4})=x_{2},\\
\vspace{2mm}\mu_{5}(x_{1},x_{3},x_{1}^{*})=-x_{4}^{*},\\
\vspace{2mm}\mu_{5}(x_{1},x_{4},x_{1}^{*})=x_{3}^{*},\\
\vspace{2mm}\mu_{5}(x_{3},x_{4},x_{1}^{*})=-x_{1}^{*},\\
\vspace{2mm}\mu_{5}(x_{2},x_{3},x_{2}^{*})=-x_{4}^{*},\\
\vspace{2mm}\mu_{5}(x_{2},x_{4},x_{2}^{*})=x_{3}^{*},\\
\vspace{2mm}\mu_{5}(x_{3},x_{4},x_{2}^{*})=-x_{2}^{*}.
\end{array}\right.
\end{array}
\begin{array}{llllllll}
   \left\{\begin{array}{l}
\vspace{2mm}\Delta^{5}(x_{1}^{*})=x_{1}^{*}\wedge x_{4}^{*}\wedge x_{3}^{*},\\
\vspace{2mm}\Delta^{5}(x_{2}^{*})=x_{2}^{*}\wedge x_{4}^{*}\wedge x_{3}^{*},\\
\vspace{2mm}\Delta^{5}(x_{1})=x_{1}\wedge x_{3}^{*}\wedge x_{4}^{*},\\
\vspace{2mm}\Delta^{5}(x_{2})=x_{2}\wedge x_{3}^{*}\wedge x_{4}^{*},\\
\vspace{2mm}\Delta^{5}(x_{3})=x_{1}\wedge x_{4}^{*}\wedge x_{1}^{*}+x_{2}\wedge x_{4}^{*}\wedge x_{2}^{*},\\
\vspace{2mm}\Delta^{5}(x_{4})=x_{1}\wedge x_{1}^{*}\wedge x_{3}^{*}+x_{2}\wedge x_{2}^{*}\wedge x_{3}^{*},\\
\vspace{2mm}\Delta^{5}(x_{3}^{*})=\Delta^{5}(x_{4}^{*})=0.
\end{array}\right.
\end{array}
\end{equation}
\end{theorem}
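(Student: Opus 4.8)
The plan is to run the construction of Corollary~\ref{cor:666} once for each of the seven non-abelian isomorphism types $(b_1),(b_2),(c_1),(c_2),(c_3),(d_1),(e_1)$ of Lemma~\ref{lem:4} (the abelian case yields the trivial structure and is omitted from the list); the assertion is then just the tabulation of the seven outputs. The point is that once an involutive derivation $D$ is fixed there is nothing structural left to verify: Theorem~\ref{thm:DAR} guarantees that the associated skew-symmetric tensor $r$ solves the $3$-Lie classical Yang--Baxter equation in $A\ltimes_{ad^*}A^*$, and Theorem~\ref{thm:666} turns that $r$ into a local cocycle $3$-Lie bialgebra on the $8$-dimensional algebra $A\ltimes_{ad^*}A^*$, with $\Delta=\Delta_1+\Delta_2+\Delta_3$ where $\Delta_1$ is given by \eqref{eq:999} and $\Delta_2=\phi_{13}\phi_{12}\Delta_1$, $\Delta_3=\phi_{12}\phi_{13}\Delta_1$. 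In particular, that $\bigl((A\ltimes_{ad^*}A^*)^*,\Delta^*\bigr)$ is a $3$-Lie algebra and that the three $1$-cocycle conditions hold is inherited verbatim, so the content of the theorem is the explicit evaluation of $\mu_i$ and $\Delta^i$.

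First I would, for each algebra in Lemma~\ref{lem:4}, exhibit an involutive derivation together with its eigenspace splitting $A=A_1\,\dot+\,A_{-1}$ of \eqref{eq:23}. Writing a candidate $D$ diagonally as $\mathrm{diag}(\lambda_1,\dots,\lambda_4)$ in the given basis, the derivation identity \eqref{eq:der} applied to each defining relation $[x_a,x_b,x_c]=\sum_d c^{\,d}_{abc}x_d$ collapses to the scalar equalities $\lambda_a+\lambda_b+\lambda_c=\lambda_d$ for every $d$ with $c^{\,d}_{abc}\neq 0$; imposing $\lambda_i\in\{1,-1\}$, so that $D^2=I_d$, makes this a finite feasibility problem, solved by inspection in each of the seven cases. (Solvability for every $4$-dimensional $3$-Lie algebra is exactly what underlies Theorem~\ref{thm:45dim}.) One then fixes one such $D$ per type, reorders the basis so that $x_1,\dots,x_s$ span $A_1$ and $x_{s+1},\dots,x_n$ span $A_{-1}$, and records $s$.

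Next I would assemble $A\ltimes_{ad^*}A^*$: its triple brackets on $A$ are those of $A$, the mixed brackets are $\mu(x_i,x_j,x_k^*)=ad^*(x_i,x_j)x_k^*$ whose coordinates come from \eqref{eq:dualrep} as $\langle ad^*(x_i,x_j)x_k^*,x_l\rangle=-\langle x_k^*,[x_i,x_j,x_l]\rangle$ (that is, by transposing the structure constants of $A$), and all brackets with at least two arguments in $A^*$ vanish; this produces the tables $\mu_1,\dots,\mu_7$. Then $\Delta_1$ follows by substituting the basis, $D$, $s$, and these $\mu$'s into \eqref{eq:999}, and $\Delta=\Delta_1+\phi_{13}\phi_{12}\Delta_1+\phi_{12}\phi_{13}\Delta_1$; because $r$ is skew-symmetric the two $\phi$-rotations cyclically symmetrize the twelve-term sum in \eqref{eq:999}, which therefore collapses into a sum of wedge monomials --- exactly the displayed $\Delta^i$.

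The only real labor --- and the step I expect to be the genuine obstacle --- is this last evaluation: for each of the eight basis vectors $x$ one must compute a dozen mixed products $\mu(x,\cdot,\cdot)\otimes(\cdot)\otimes(\cdot)$ and then merge them, and the parametrized type $(c_2)$ (producing $\mu_4,\Delta^4$ with the free parameter $\alpha$) roughly doubles the number of terms. There is no conceptual difficulty beyond organizing this carefully and checking that the seven chosen derivations reproduce the seven normal forms listed; I would carry it out type by type and suppress the arithmetic, as in the classification-based proof of Theorem~\ref{thm:45dim}.
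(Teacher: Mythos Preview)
Your proposal is correct and follows essentially the same approach as the paper: for each isomorphism type in Lemma~\ref{lem:4} you pick an involutive derivation, write out the semi-direct product $A\ltimes_{ad^*}A^*$ via \eqref{eq:semiprod} and \eqref{eq:dualrep}, and then invoke the general machinery (the paper cites Theorems~\ref{thm:888} and~\ref{thm:666} directly, you cite their specialization Corollary~\ref{cor:666}) to obtain and tabulate $\Delta^i$. Your description of locating the diagonal involutive derivation as the feasibility system $\lambda_a+\lambda_b+\lambda_c=\lambda_d$ with $\lambda_i\in\{\pm1\}$ is a bit more explicit than the paper, which simply asserts the eigenspace splitting case by case, but the argument is otherwise the same.
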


\begin{proof} By Lemma \ref{lem:4}, and Eq \eqref{eq:semiprod}, if $A$ is the case of $(b_1)$, then there is an involutive derivation $D$ on $A$ such that $x_1, x_2, x_3\in A_1$ and $x_4\in A_{-1}$. Then
 the multiplication of the semi-direct product 3-Lie algebra $( A\ltimes_{ad^*} A^*,\mu_{1})$ in  the basis $\{x_1, \cdots, x_4$, $x_1^*, \cdots, x_4^*\}$
is as follows

$\mu_{1}(x_{2},x_{3},x_{4})=x_{1},$ ~~$\mu_{1}(x_{2},x_{3},x_{1}^{*})=ad^{*}(x_{2},x_{3})(x_{1}^{*})=-x_{4}^{*},$

$\mu_{1}(x_{2},x_{4},x_{1}^{*})=ad^{*}(x_{2},x_{4})(x_{1}^{*})=-x_{3}^{*},$
~~$\mu_{1}(x_{3},x_{4},x_{1}^{*})=ad^{*}(x_{3},x_{4})(x_{1}^{*})=-x_{2}^{*}.
$
\\
Thanks to Theorem \ref{thm:888} and Theorem \ref{thm:666},  $( A\ltimes_{ad^*} A^*,\mu_{1}, \Delta^{1})$ is  a local cocycle $3$-Lie bialgebra, where

$
\Delta_{1}^{1}(x_{1}^{*})=[x_{1}^{*},x_2,x_3]\otimes x_3^{*}\otimes x_2^{*}
+[x_{1}^{*},x_3,x_2]\otimes x_2^{*}\otimes x_3^{*}+[x_{1}^{*},x_2,-x_4]\otimes x_4^{*}\otimes x_2^{*}$

\hspace{15mm}$+[x_{1}^{*},x_3,-x_4]\otimes x_4^{*}\otimes x_3^{*}+[x_{1}^{*},-x_4,x_2]\otimes x_2^{*}\otimes x_4^{*}
+[x_{1}^{*},-x_4,x_3]\otimes x_3^{*}\otimes x_4^{*}$

\hspace{1.4cm}$=-x_{4}^{*}\otimes x_3^{*}\otimes x_2^{*}+x_{4}^{*}\otimes x_2^{*}\otimes x_3^{*}
-x_{3}^{*}\otimes x_4^{*}\otimes x_2^{*}$

\hspace{1.5cm}
$+x_2^{*}\otimes x_4^{*}\otimes x_3^{*}+x_{3}^{*}\otimes x_2^{*}\otimes x_4^{*}
-x_{2}^{*}\otimes x_3^{*}\otimes x_4^{*},$

$\Delta_{2}^{1}(x_{1}^{*})=\phi _{13}\phi_{12}\Delta_{1}^{1}(x_{1}^{*}),~~~ \Delta_{3}^{1}(x_{1}^{*})=\phi _{12}\phi_{13}\Delta_{1}^{1}(x_{1}^{*}),$

$\Delta^{1}(x_{1}^{*})=\Delta_{1}^{1}(x_{1}^{*})+\Delta_{2}^{1}(x_{1}^{*})+\Delta_{3}^{1}(x_{1}^{*})$

\hspace{15mm}$
=-x_{4}^{*}\otimes x_3^{*}\otimes x_2^{*}-x_{4}^{*}\otimes x_2^{*}\otimes x_3^{*}
+x_{2}^{*}\otimes x_4^{*}\otimes x_3^{*}$

\hspace{15mm}$
-x_2^{*}\otimes x_3^{*}\otimes x_4^{*}+x_{3}^{*}\otimes x_2^{*}\otimes x_4^{*}
-x_{3}^{*}\otimes x_4^{*}\otimes x_2^{*}=x_2^{*}\wedge x_4^{*} \wedge x_3^{*}.
$

Similarly,  we have  $\Delta^{1}(x_{2})=x_{1}\wedge x_{3}^{*}\wedge x_{4}^{*},$ $\Delta^{1}(x_{3})=x_{1}\wedge x_{4}^{*}\wedge x_{2}^{*},$
$\Delta^{1}(x_{4})=x_{1}\wedge x_{2}^{*}\wedge x_{3}^{*},$ and $\Delta^{1}(x_{2}^{*})=\Delta^{1}(x_{3}^{*})=\Delta^{1}(x_{4}^{*})=\Delta^{1}(x_{1})=0.$
 It follows Eq \eqref{eq:4b1}.
  Eq \eqref{eq:4b2} follows from the completely similar discussion.

For the case $\dim A^1=2,$ we get local cocycle $3$-Lie bialgebras $( A\ltimes_{ad^*} A^*, \mu_{i}, \Delta^{i})$), $i=3, 4, 5$. If $\dim A^1=3,$ then we get local cocycle $3$-Lie bialgebras $( A\ltimes_{ad^*} A^*, \mu_{i}, \Delta^{i})$), $i=5, 6$.
If $\dim A^1=4,$ then we get local cocycle $3$-Lie bialgebras $( A\ltimes_{ad^*} A^*, \mu_{7}, \Delta^{7})$).  We omit the proving process since it is similar to the case $\dim A^1=1$.
 \end{proof}

\begin{theorem}\label{thm:5} Let $A$ be  a 5-dimensional 3-Lie algebra with a basis $\{x_1,x_2,$ $x_3,x_4,$ $x_5\}$ and satisfy  $\dim A^1\leq 3$, or, $\dim A^1=4$ and $Z(A)\neq 0$. Then we have  $10$-dimensional local cocycle $3$-Lie bialgebras  $( A\ltimes_{ad^*} A^*,\psi_{j}, \widetilde{\Delta}^{j})$, $1\leq j\leq 17$, where $ (A\ltimes_{ad^*} A^*, \psi_{j})$ are semi-direct product  3-Lie algebras, and
\begin{equation}
\begin{array}{llll}
   \left\{\begin{array}{l}
\vspace{2mm}\psi_{1}(x_{2},x_{3},x_{4})=x_{1},\\
\vspace{2mm}\psi_{1}(x_{2},x_{3},x_{1}^{*})=-x_{4}^{*},\\
\vspace{2mm}\psi_{1}(x_{3},x_{4},x_{1}^{*})=-x_{2}^{*},\\
\vspace{2mm}\psi_{1}(x_{2},x_{4},x_{1}^{*})=x_{3}^{*}.
\end{array}\right.
\end{array}
\begin{array}{llllllllll}
   \left\{\begin{array}{l}
\vspace{2mm}\widetilde{\Delta}^{1}(x_{1}^{*})=x_{2}^{*}\wedge x_{4}^{*}\wedge x_{3}^{*},\\
\vspace{2mm}\widetilde{\Delta}^{1}(x_{2})=x_{1}\wedge x_{3}^{*}\wedge x_{4}^{*},\\
\vspace{2mm}\widetilde{\Delta}^{1}(x_{3})=x_{1}\wedge x_{4}^{*}\wedge x_{2}^{*},\\
\vspace{2mm}\widetilde{\Delta}^{1}(x_{4})=x_{1}\wedge x_{2}^{*}\wedge x_{3}^{*},\\
\vspace{2mm}\widetilde{\Delta}^{1}(x_{2}^{*})=\widetilde{\Delta}^{1}(x_{5})=\widetilde{\Delta}^{1}(x_{3}^{*})=0,\\
\vspace{2mm}\widetilde{\Delta}^{1}(x_{4}^{*})=\widetilde{\Delta}^{1}(x_{5}^{*})=\widetilde{\Delta}^{1}(x_{1})=0.
\end{array}\right.
\end{array}
\end{equation}
\begin{equation}
\begin{array}{llll}
   \left\{\begin{array}{l}
\vspace{2mm}\psi_{2}(x_{2},x_{3},x_{4})=x_{1},\\
\vspace{2mm}\psi_{2}(x_{3},x_{4},x_{5})=x_{2},\\
\vspace{2mm}\psi_{2}(x_{2},x_{3},x_{1}^{*})=-x_{4}^{*},\\
\vspace{2mm}\psi_{2}(x_{2},x_{4},x_{1}^{*})=x_{3}^{*},\\
\vspace{2mm}\psi_{2}(x_{3},x_{4},x_{1}^{*})=-x_{2}^{*},\\
\vspace{2mm}\psi_{2}(x_{3},x_{4},x_{2}^{*})=-x_{5}^{*},\\
\vspace{2mm}\psi_{2}(x_{4},x_{5},x_{2}^{*})=-x_{3}^{*},\\
\vspace{2mm}\psi_{2}(x_{3},x_{5},x_{2}^{*})=x_{4}^{*}.
\end{array}\right.
\end{array}
\begin{array}{llllllllll}
   \left\{\begin{array}{l}
\vspace{2mm}\widetilde{\Delta}^{2}(x_{1}^{*})=x_{2}^{*}\wedge x_{4}^{*}\wedge x_{3}^{*},\\
\vspace{2mm}\widetilde{\Delta}^{2}(x_{2}^{*})=x_{3}^{*}\wedge x_{5}^{*}\wedge x_{4}^{*},\\
\vspace{2mm}\widetilde{\Delta}^{2}(x_{2})=x_{1}\wedge x_{3}^{*}\wedge x_{4}^{*},\\
\vspace{2mm}\widetilde{\Delta}^{2}(x_{3})=x_{1}\wedge x_{4}^{*}\wedge x_{2}^{*}+x_{2}\wedge x_{4}^{*}\wedge x_{5}^{*},\\
\vspace{2mm}\widetilde{\Delta}^{2}(x_{4})=x_{1}\wedge x_{2}^{*}\wedge x_{3}^{*}+x_{5}^{*}\wedge x_{3}^{*}\wedge x_{2},\\
\vspace{2mm}\widetilde{\Delta}^{2}(x_{5})=x_{2}\wedge x_{3}^{*}\wedge x_{4}^{*},\\
\vspace{2mm}\widetilde{\Delta}^{2}(x_{3}^{*})=\widetilde{\Delta}^{2}(x_{4}^{*})=\widetilde{\Delta}^{2}(x_{5}^{*})=\widetilde{\Delta}^{2}(x_{1})=0.
\end{array}\right.
\end{array}
\end{equation}
\begin{equation}
\begin{array}{llll}
   \left\{\begin{array}{l}
\vspace{2mm}\psi_{3}(x_{2},x_{3},x_{4})=x_{1},\\
\vspace{2mm}\psi_{3}(x_{2},x_{4},x_{5})=x_{2},\\
\vspace{2mm}\psi_{3}(x_{1},x_{4},x_{5})=x_{1},\\
\vspace{2mm}\psi_{3}(x_{2},x_{3},x_{1}^{*})=-x_{4}^{*},\\
\vspace{2mm}\psi_{3}(x_{2},x_{4},x_{1}^{*})=x_{3}^{*},\\
\vspace{2mm}\psi_{3}(x_{3},x_{4},x_{1}^{*})=-x_{2}^{*},\\
\vspace{2mm}\psi_{3}(x_{2},x_{4},x_{2}^{*})=-x_{5}^{*},\\
\vspace{2mm}\psi_{3}(x_{2},x_{5},x_{2}^{*})=x_{4}^{*},\\
\vspace{2mm}\psi_{3}(x_{4},x_{5},x_{2}^{*})=-x_{2}^{*},\\
\vspace{2mm}\psi_{3}(x_{1},x_{4},x_{1}^{*})=-x_{5}^{*},\\
\vspace{2mm}\psi_{3}(x_{4},x_{5},x_{1}^{*})=-x_{1}^{*},\\
\vspace{2mm}\psi_{3}(x_{1},x_{5},x_{1}^{*})=x_{4}^{*}.
\end{array}\right.
\end{array}
\begin{array}{llllllllll}
   \left\{\begin{array}{l}
\vspace{2mm}\widetilde{\Delta}^{3}(x_{1}^{*})=x_{1}^{*}\wedge x_{5}^{*}\wedge x_{4}^{*}+x_{2}^{*}\wedge x_{4}^{*}\wedge x_{3}^{*},\\
\vspace{2mm}\widetilde{\Delta}^{3}(x_{2}^{*})=x_{2}^{*}\wedge x_{5}^{*}\wedge x_{4}^{*},\\
\vspace{2mm}\widetilde{\Delta}^{3}(x_{1})=x_{1}\wedge x_{4}^{*}\wedge x_{5}^{*},\\
\vspace{2mm}\widetilde{\Delta}^{3}(x_{2})=x_{1}\wedge x_{3}^{*}\wedge x_{4}^{*}+x_{2}\wedge x_{4}^{*}\wedge x_{5}^{*},\\
\vspace{2mm}\widetilde{\Delta}^{3}(x_{3})=x_{1}\wedge x_{4}^{*}\wedge x_{2}^{*},\\
\vspace{2mm}\widetilde{\Delta}^{3}(x_{4})=x_{1}\wedge x_{2}^{*}\wedge x_{3}^{*}+x_{2}\wedge x_{5}^{*}\wedge x_{2}^{*}\\
\vspace{2mm}\hspace{1.5cm}+x_{5}^{*}\wedge x_{1}^{*}\wedge x_{1},\\
\vspace{2mm}\widetilde{\Delta}^{3}(x_{5})=x_{2}^{*}\wedge x_{4}^{*}\wedge x_{2}+x_{1}^{*}\wedge x_{4}^{*}\wedge x_{1},\\
\vspace{2mm}\widetilde{\Delta}^{3}(x_{3}^{*})=\widetilde{\Delta}^{3}(x_{4}^{*})=\widetilde{\Delta}^{3}(x_{5}^{*})=0.
\end{array}\right.
\end{array}
\end{equation}
\begin{equation}
\begin{array}{llll}
   \left\{\begin{array}{l}
\vspace{2mm}\psi_{4}(x_{2},x_{3},x_{4})=\alpha x_{1}+x_{2 },\\
\vspace{2mm}\psi_{4}(x_{1},x_{3},x_{4})=x_{2},\\
\vspace{2mm}\psi_{4}(x_{2},x_{4},x_{5})=x_{2},\\
\vspace{2mm}\psi_{4}(x_{1},x_{4},x_{5})=x_{1},\\
\vspace{2mm}\psi_{4}(x_{2},x_{3},x_{1}^{*})=-\alpha x_{4}^{*},\\
\vspace{2mm}\psi_{4}(x_{2},x_{4},x_{1}^{*})=\alpha x_{3}^{*},\\
\vspace{2mm}\psi_{4}(x_{3},x_{4},x_{1}^{*})=-\alpha x_{2}^{*},\\
\vspace{2mm}\psi_{4}(x_{2},x_{3},x_{2}^{*})=-x_{4}^{*},\\
\vspace{2mm}\psi_{4}(x_{2},x_{4},x_{2}^{*})=x_{3}^{*},\\
\vspace{2mm}\psi_{4}(x_{3},x_{4},x_{2}^{*})=-x_{2}^{*},\\
\vspace{2mm}\psi_{4}(x_{1},x_{3},x_{2}^{*})=-x_{4}^{*},\\
\vspace{2mm}\psi_{4}(x_{1},x_{4},x_{2}^{*})=x_{3}^{*},\\
\vspace{2mm}\psi_{4}(x_{3},x_{4},x_{2}^{*})=-x_{1}^{*},\\
\vspace{2mm}\psi_{4}(x_{2},x_{4},x_{2}^{*})=-x_{5}^{*},\\
\vspace{2mm}\psi_{4}(x_{4},x_{5},x_{2}^{*})=-x_{2}^{*},\\
\vspace{2mm}\psi_{4}(x_{2},x_{5},x_{2}^{*})=x_{4}^{*},\\
\vspace{2mm}\psi_{4}(x_{1},x_{4},x_{1}^{*})=-x_{5}^{*},\\
\vspace{2mm}\psi_{4}(x_{4},x_{5},x_{1}^{*})=-x_{1}^{*},\\
\vspace{2mm}\psi_{4}(x_{1},x_{5},x_{1}^{*})=x_{4}^{*}.
\end{array}\right.
\end{array}
\begin{array}{llllllllll}
   \left\{\begin{array}{l}
\vspace{2mm}\widetilde{\Delta}^{4}(x_{1}^{*})=x_{1}^{*}\wedge x_{5}^{*}\wedge x_{4}^{*}+\alpha x_{2}^{*}\wedge x_{4}^{*}\wedge x_{3}^{*},\\
\vspace{2mm}\widetilde{\Delta}^{4}(x_{2}^{*})=x_{2}^{*}\wedge x_{4}^{*}\wedge x_{3}^{*}+x_{1}^{*}\wedge x_{4}^{*}\wedge x_{3}^{*}\\
\vspace{2mm}\hspace{1.5cm}+x_{2}^{*}\wedge x_{5}^{*}\wedge x_{4}^{*},\\
\vspace{2mm}\widetilde{\Delta}^{4}(x_{1})=x_{2}\wedge x_{3}^{*}\wedge x_{4}^{*}+x_{1}\wedge x_{4}^{*}\wedge x_{5}^{*},\\
\vspace{2mm}\widetilde{\Delta}^{4}(x_{2})=\alpha x_{1}\wedge x_{3}^{*}\wedge x_{4}^{*}+x_{2}\wedge x_{3}^{*}\wedge x_{4}^{*}\\
\vspace{2mm}\hspace{1.5cm}+x_{2}\wedge x_{4}^{*}\wedge x_{5}^{*},\\
\vspace{2mm}\widetilde{\Delta}^{4}(x_{3})=\alpha x_{1}\wedge x_{4}^{*}\wedge x_{2}^{*}+x_{2}\wedge x_{4}^{*}\wedge x_{2}^{*}\\
\vspace{2mm}\hspace{1.5cm}+x_{2}\wedge x_{4}^{*}\wedge x_{1}^{*},\\
\vspace{2mm}\widetilde{\Delta}^{4}(x_{4})=\alpha x_{1}\wedge x_{2}^{*}\wedge x_{3}^{*}+x_{2}\wedge x_{2}^{*}\wedge x_{3}^{*}\\
\vspace{2mm}\hspace{1.5cm}+x_{1}^{*}\wedge x_{3}^{*}\wedge x_{2},+x_{5}^{*}\wedge x_{2}^{*}\wedge x_{2}\\
\vspace{2mm}\hspace{1.5cm}+x_{5}^{*}\wedge x_{1}^{*}\wedge x_{1},\\
\vspace{2mm}\widetilde{\Delta}^{4}(x_{5})=x_{2}^{*}\wedge x_{4}^{*}\wedge x_{2}+x_{1}^{*}\wedge x_{4}^{*}\wedge x_{1},\\
\vspace{2mm}\widetilde{\Delta}^{4}(x_{3}^{*})=\widetilde{\Delta}^{4}(x_{4}^{*})=\widetilde{\Delta}^{4}(x_{5}^{*})=0.\\
\end{array}\right.
\end{array}
\end{equation}
\begin{equation}
\begin{array}{llll}
   \left\{\begin{array}{l}
\vspace{2mm}\psi_{5}(x_{2},x_{3},x_{4})=\alpha x_{1}+x_{2},\\
\vspace{2mm}\psi_{5}(x_{1},x_{3},x_{4})=x_{2},\\
\vspace{2mm}\psi_{5}(x_{2},x_{3},x_{1}^{*})=-\alpha x_{4}^{*},\\
\vspace{2mm}\psi_{5}(x_{3},x_{4},x_{1}^{*})=-\alpha x_{2}^{*},\\
\vspace{2mm}\psi_{5}(x_{2},x_{4},x_{1}^{*})=\alpha x_{3}^{*},\\
\vspace{2mm}\psi_{5}(x_{2},x_{3},x_{2}^{*})=-x_{4}^{*},\\
\vspace{2mm}\psi_{5}(x_{2},x_{4},x_{2}^{*})=x_{3}^{*},\\
\vspace{2mm}\psi_{5}(x_{3},x_{4},x_{2}^{*})=-x_{2}^{*}-x_{1}^{*},\\
\vspace{2mm}\psi_{5}(x_{1},x_{3},x_{2}^{*})=-x_{4}^{*},\\
\vspace{2mm}\psi_{5}(x_{1},x_{4},x_{2}^{*})=x_{3}^{*}.
\end{array}\right.
\end{array}
\begin{array}{llllllllll}
   \left\{\begin{array}{l}
\vspace{2mm}\widetilde{\Delta}^{5}(x_{1}^{*})=\alpha x_{2}^{*}\wedge x_{4}^{*}\wedge x_{3}^{*},\\
\vspace{2mm}\widetilde{\Delta}^{5}(x_{2}^{*})=x_{1}^{*}\wedge x_{4}^{*}\wedge x_{3}^{*}+x_{2}^{*}\wedge x_{4}^{*}\wedge x_{3}^{*},\\
\vspace{2mm}\widetilde{\Delta}^{5}(x_{1})=\alpha x_{2}\wedge x_{3}^{*}\wedge x_{4}^{*},\\
\vspace{2mm}\widetilde{\Delta}^{5}(x_{2})=x_{1}\wedge x_{3}^{*}\wedge x_{4}^{*}+x_{2}\wedge x_{3}^{*}\wedge x_{4}^{*},\\
\vspace{2mm}\widetilde{\Delta}^{5}(x_{3})=x_{1}^{*}\wedge x_{2}\wedge x_{4}^{*}+x_{2}\wedge x_{4}^{*}\wedge x_{2}^{*}\\
\vspace{2mm}\hspace{1.5cm}+\alpha x_{1}\wedge x_{4}^{*}\wedge x_{2}^{*},\\
\vspace{2mm}\widetilde{\Delta}^{5}(x_{4})=x_{1}^{*}\wedge x_{3}^{*}\wedge x_{2}+\alpha x_{1}\wedge x_{2}^{*}\wedge x_{3}^{*}\\
\vspace{2mm}\hspace{1.5cm}+x_{2}\wedge x_{2}^{*}\wedge x_{3}^{*},\\
\vspace{2mm}\widetilde{\Delta}^{5}(x_{3}^{*})=\widetilde{\Delta}^{5}(x_{4}^{*})=0,\\
\vspace{2mm}\widetilde{\Delta}^{5}(x_{5}^{*})=\widetilde{\Delta}^{5}(x_{5})=0.
\end{array}\right.
\end{array}
\end{equation}
\begin{equation}
\begin{array}{llll}
   \left\{\begin{array}{l}
\vspace{2mm}\psi_{6}(x_{2},x_{3},x_{4})=x_{1},\\
\vspace{2mm}\psi_{6}(x_{1},x_{3},x_{4})=x_{2},\\
\vspace{2mm}\psi_{6}(x_{2},x_{4},x_{5})=x_{2},\\
\vspace{2mm}\psi_{6}(x_{1},x_{4},x_{5})=x_{1},\\
\vspace{2mm}\psi_{6}(x_{2},x_{3},x_{1}^{*})=-x_{4}^{*},\\
\vspace{2mm}\psi_{6}(x_{2},x_{4},x_{1}^{*})=x_{3}^{*},\\
\vspace{2mm}\psi_{6}(x_{3},x_{4},x_{1}^{*})=-x_{2}^{*},\\
\vspace{2mm}\psi_{6}(x_{1},x_{3},x_{2}^{*})=-x_{4}^{*},\\
\vspace{2mm}\psi_{6}(x_{3},x_{4},x_{2}^{*})=-x_{1}^{*},\\
\vspace{2mm}\psi_{6}(x_{1},x_{4},x_{2}^{*})=x_{3}^{*},\\
\vspace{2mm}\psi_{6}(x_{2},x_{4},x_{2}^{*})=-x_{5}^{*},\\
\vspace{2mm}\psi_{6}(x_{4},x_{5},x_{2}^{*})=-x_{2}^{*},\\
\vspace{2mm}\psi_{6}(x_{2},x_{5},x_{2}^{*})=x_{4}^{*},\\
\vspace{2mm}\psi_{6}(x_{1},x_{4},x_{1}^{*})=-x_{5}^{*},\\
\vspace{2mm}\psi_{6}(x_{4},x_{5},x_{1}^{*})=-x_{1}^{*},\\
\vspace{2mm}\psi_{6}(x_{1},x_{5},x_{2}^{*})=x_{4}^{*}.
\end{array}\right.
\end{array}
\begin{array}{llllllllll}
   \left\{\begin{array}{l}
\vspace{2mm}\widetilde{\Delta}^{6}(x_{1}^{*})=x_{2}^{*}\wedge x_{4}^{*}\wedge x_{3}^{*}+ x_{1}^{*}\wedge x_{5}^{*}\wedge x_{4}^{*},\\
\vspace{2mm}\widetilde{\Delta}^{6}(x_{2}^{*})=x_{1}^{*}\wedge x_{4}^{*}\wedge x_{3}^{*}+x_{2}^{*}\wedge x_{5}^{*}\wedge x_{4}^{*},\\
\vspace{2mm}\widetilde{\Delta}^{6}(x_{1})=x_{1}\wedge x_{4}^{*}\wedge x_{5}^{*}+x_{2}\wedge x_{3}^{*}\wedge x_{4}^{*},\\
\vspace{2mm}\widetilde{\Delta}^{6}(x_{2})=x_{1}\wedge x_{3}^{*}\wedge x_{4}^{*}+x_{2}\wedge x_{4}^{*}\wedge x_{5}^{*},\\
\vspace{2mm}\widetilde{\Delta}^{6}(x_{3})=x_{1}\wedge x_{4}^{*}\wedge x_{2}^{*}+x_{2}\wedge x_{4}^{*}\wedge x_{1}^{*},\\
\vspace{2mm}\widetilde{\Delta}^{6}(x_{4})=x_{1}\wedge x_{2}^{*}\wedge x_{3}^{*}+x_{2}\wedge x_{1}^{*}\wedge x_{3}^{*},\\
\vspace{2mm}\hspace{1.5cm} +x_{5}^{*}\wedge x_{2}^{*}\wedge x_{2}+x_{5}^{*}\wedge x_{1}^{*}\wedge x_{1},\\
\vspace{2mm}\widetilde{\Delta}^{6}(x_{5})=x_{2}^{*}\wedge x_{4}^{*}\wedge x_{2}+x_{1}^{*}\wedge x_{4}^{*}\wedge x_{1},\\
\vspace{2mm}\widetilde{\Delta}^{6}(x_{3}^{*})=\widetilde{\Delta}^{6}(x_{4}^{*})=\widetilde{\Delta}^{6}(x_{5}^{*})=0.\\
\end{array}\right.
\end{array}
\end{equation}
\begin{equation}
\begin{array}{llll}
   \left\{\begin{array}{l}
\vspace{2mm}\psi_{7}(x_{2},x_{3},x_{4})=x_{1},\\
\vspace{2mm}\psi_{7}(x_{2},x_{4},x_{5})=x_{2},\\
\vspace{2mm}\psi_{7}(x_{3},x_{4},x_{5})=x_{3},\\
\vspace{2mm}\psi_{7}(x_{2},x_{3},x_{1}^{*})=-x_{4}^{*},\\
\vspace{2mm}\psi_{7}(x_{2},x_{4},x_{1}^{*})= x_{3}^{*},\\
\vspace{2mm}\psi_{7}(x_{3},x_{4},x_{1}^{*})=-x_{2}^{*},\\
\vspace{2mm}\psi_{7}(x_{2},x_{4},x_{2}^{*})=-x_{5}^{*},\\
\vspace{2mm}\psi_{7}(x_{4},x_{5},x_{2}^{*})=-x_{2}^{*},\\
\vspace{2mm}\psi_{7}(x_{2},x_{5},x_{2}^{*})=x_{4}^{*},\\
\vspace{2mm}\psi_{7}(x_{3},x_{4},x_{3}^{*})=-x_{5}^{*},\\
\vspace{2mm}\psi_{7}(x_{4},x_{5},x_{3}^{*})=-x_{3}^{*},\\
\vspace{2mm}\psi_{7}(x_{3},x_{5},x_{3}^{*})=x_{4}^{*}.
\end{array}\right.
\end{array}
\begin{array}{llllllllll}
   \left\{\begin{array}{l}
\vspace{2mm}\widetilde{\Delta}^{7}(x_{1}^{*})=x_{2}^{*}\wedge x_{4}^{*}\wedge x_{3}^{*},\\
\vspace{2mm}\widetilde{\Delta}^{7}(x_{2}^{*})=x_{2}^{*}\wedge x_{5}^{*}\wedge x_{4}^{*},\\
\vspace{2mm}\widetilde{\Delta}^{7}(x_{3}^{*})=x_{3}^{*}\wedge x_{5}^{*}\wedge x_{4}^{*},\\
\vspace{2mm}\widetilde{\Delta}^{7}(x_{2})=x_{1}\wedge x_{3}^{*}\wedge x_{4}^{*}+x_{2}\wedge x_{4}^{*}\wedge x_{5}^{*},\\
\vspace{2mm}\widetilde{\Delta}^{7}(x_{3})=x_{1}\wedge x_{4}^{*}\wedge x_{2}^{*}+x_{3}\wedge x_{4}^{*}\wedge x_{5}^{*},\\
\vspace{2mm}\widetilde{\Delta}^{7}(x_{4})=x_{1}\wedge x_{2}^{*}\wedge x_{3}^{*}+x_{2}\wedge x_{5}^{*}\wedge x_{2}^{*}\\
\vspace{2mm}\hspace{1.5cm}+x_{3}\wedge x_{5}^{*}\wedge x_{3}^{*},\\
\vspace{2mm}\widetilde{\Delta}^{7}(x_{5})=x_{2}\wedge x_{2}^{*}\wedge x_{4}^{*}+x_{3}\wedge x_{3}^{*}\wedge x_{4}^{*},\\
\vspace{2mm}\widetilde{\Delta}^{7}(x_{4}^{*})=\widetilde{\Delta}^{7}(x_{5}^{*})=\widetilde{\Delta}^{7}(x_{1})=0.
\end{array}\right.
\end{array}
\end{equation}
\begin{equation}
\begin{array}{llll}
   \left\{\begin{array}{l}
\vspace{2mm}\psi_{8}(x_{2},x_{3},x_{4})=x_{2},\\
\vspace{2mm}\psi_{8}(x_{1},x_{3},x_{4})=x_{1},\\
\vspace{2mm}\psi_{8}(x_{1},x_{3},x_{1}^{*})=-x_{4}^{*},\\
\vspace{2mm}\psi_{8}(x_{1},x_{4},x_{1}^{*})= x_{3}^{*},\\
\vspace{2mm}\psi_{8}(x_{3},x_{4},x_{1}^{*})=-x_{1}^{*},\\
\vspace{2mm}\psi_{8}(x_{2},x_{3},x_{2}^{*})=-x_{4}^{*},\\
\vspace{2mm}\psi_{8}(x_{2},x_{4},x_{2}^{*})=x_{3}^{*},\\
\vspace{2mm}\psi_{8}(x_{3},x_{4},x_{2}^{*})=-x_{2}^{*}.
\end{array}\right.
\end{array}
\begin{array}{llllllllll}
   \left\{\begin{array}{l}
\vspace{2mm}\widetilde{\Delta}^{8}(x_{1}^{*})= x_{1}^{*}\wedge x_{4}^{*}\wedge x_{3}^{*},\\
\vspace{2mm}\widetilde{\Delta}^{8}(x_{1})=x_{1}\wedge x_{3}^{*}\wedge x_{4}^{*},\\
\vspace{2mm}\widetilde{\Delta}^{8}(x_{2})=x_{2}\wedge x_{3}^{*}\wedge x_{4}^{*},\\
\vspace{2mm}\widetilde{\Delta}^{8}(x_{3})=x_{1}\wedge x_{4}^{*}\wedge x_{1}^{*}+x_{2}\wedge x_{4}^{*}\wedge x_{2}^{*},\\
\vspace{2mm}\widetilde{\Delta}^{8}(x_{4})=x_{1}\wedge x_{1}^{*}\wedge x_{3}^{*}+x_{2}\wedge x_{2}^{*}\wedge x_{3}^{*},\\
\vspace{2mm}\widetilde{\Delta}^{8}(x_{2}^{*})=x_{2}^{*}\wedge x_{4}^{*}\wedge x_{3}^{*},\\
\vspace{2mm}\widetilde{\Delta}^{8}(x_{3}^{*})=\widetilde{\Delta}^{8}(x_{4}^{*})=0,\\
\vspace{2mm}\widetilde{\Delta}^{8}(x_{5}^{*})=\widetilde{\Delta}^{8}(x_{5})=0.
\end{array}\right.
\end{array}
\end{equation}
\begin{equation}
\begin{array}{llll}
   \left\{\begin{array}{l}
\vspace{2mm}\psi_{9}(x_{2},x_{3},x_{4})=x_{1},\\
\vspace{2mm}\psi_{9}(x_{3},x_{4},x_{5})=x_{3}+\alpha x_{2},\\
\vspace{2mm}\psi_{9}(x_{2},x_{4},x_{5})=x_{3},\\
\vspace{2mm}\psi_{9}(x_{1},x_{4},x_{5})=x_{1},\\
\vspace{2mm}\psi_{9}(x_{2},x_{3},x_{1}^{*})=-x_{4}^{*},\\
\vspace{2mm}\psi_{9}(x_{2},x_{4},x_{1}^{*})= x_{3}^{*},\\
\vspace{2mm}\psi_{9}(x_{3},x_{4},x_{1}^{*})=-x_{2}^{*},\\
\vspace{2mm}\psi_{9}(x_{3},x_{4},x_{3}^{*})=-x_{5}^{*},\\
\vspace{2mm}\psi_{9}(x_{4},x_{5},x_{3}^{*})=-x_{3}^{*},\\
\vspace{2mm}\psi_{9}(x_{3},x_{5},x_{3}^{*})=x_{4}^{*},\\
\vspace{2mm}\psi_{9}(x_{3},x_{4},x_{2}^{*})=-\alpha x_{5}^{*},\\
\vspace{2mm}\psi_{9}(x_{4},x_{5},x_{2}^{*})=-\alpha x_{3}^{*},\\
\vspace{2mm}\psi_{9}(x_{3},x_{5},x_{2}^{*})=\alpha x_{4}^{*},\\
\vspace{2mm}\psi_{9}(x_{2},x_{4},x_{3}^{*})=-x_{5}^{*},\\
\vspace{2mm}\psi_{9}(x_{4},x_{5},x_{3}^{*})=-x_{2}^{*},\\
\vspace{2mm}\psi_{9}(x_{2},x_{5},x_{3}^{*})=x_{4}^{*},\\
\vspace{2mm}\psi_{9}(x_{1},x_{4},x_{1}^{*})=-x_{5}^{*},\\
\vspace{2mm}\psi_{9}(x_{4},x_{5},x_{1}^{*})=-x_{1}^{*},\\
\vspace{2mm}\psi_{9}(x_{1},x_{5},x_{1}^{*})=x_{4}^{*}.
\end{array}\right.
\end{array}
\begin{array}{llllllllll}
   \left\{\begin{array}{l}
\vspace{2mm}\widetilde{\Delta}^{9}(x_{1}^{*})=x_{3}^{*}\wedge x_{2}^{*}\wedge x_{4}^{*}+x_{4}^{*}\wedge x_{1}^{*}\wedge x_{5}^{*},\\
\vspace{2mm}\widetilde{\Delta}^{9}(x_{2}^{*})=\alpha x_{4}^{*}\wedge x_{3}^{*}\wedge x_{5}^{*},\\
\vspace{2mm}\widetilde{\Delta}^{9}(x_{3}^{*})=x_{4}^{*}\wedge x_{3}^{*}\wedge x_{5}^{*}+x_{4}^{*}\wedge x_{2}^{*}\wedge x_{5}^{*},\\
\vspace{2mm}\widetilde{\Delta}^{9}(x_{1})=x_{1}\wedge x_{4}^{*}\wedge x_{5}^{*},\\
\vspace{2mm}\widetilde{\Delta}^{9}(x_{2})=x_{1}\wedge x_{3}^{*}\wedge x_{4}^{*}+x_{3}\wedge x_{4}^{*}\wedge x_{5}^{*},\\
\vspace{2mm}\widetilde{\Delta}^{9}(x_{3})=x_{1}\wedge x_{4}^{*}\wedge x_{2}^{*}+\alpha x_{2}\wedge x_{4}^{*}\wedge x_{5}^{*}\\
\vspace{2mm}\hspace{1.5cm}+x_{3}\wedge x_{4}^{*}\wedge x_{5}^{*},\\
\vspace{2mm}\widetilde{\Delta}^{9}(x_{4})=x_{1}\wedge x_{2}^{*}\wedge x_{3}^{*}+x_{3}\wedge x_{5}^{*}\wedge x_{3}^{*}\\
\vspace{2mm}\hspace{1.5cm}+\alpha x_{2}\wedge x_{5}^{*}\wedge x_{3}^{*}+x_{3}\wedge x_{5}^{*}\wedge x_{2}^{*}\\
\vspace{2mm}\hspace{1.5cm}+x_{1}\wedge x_{5}^{*}\wedge x_{1}^{*},\\
\vspace{2mm}\widetilde{\Delta}^{9}(x_{5})= x_{3}^{*}\wedge x_{4}^{*}\wedge x_{3}+\alpha x_{3}^{*}\wedge x_{4}^{*}\wedge x_{2}\\
\vspace{2mm}\hspace{1.5cm}+ x_{2}^{*}\wedge x_{4}^{*}\wedge x_{3}+ x_{1}^{*}\wedge x_{4}^{*}\wedge x_{1},\\
\vspace{2mm}\widetilde{\Delta}^{9}(x_{4}^{*})=\widetilde{\Delta}^{9}(x_{5}^{*})=0.\\
\end{array}\right.
\end{array}
\end{equation}
\begin{equation}
\begin{array}{llll}
   \left\{\begin{array}{l}
\vspace{2mm}\psi_{10}(x_{2},x_{3},x_{4})=x_{1},\\
\vspace{2mm}\psi_{10}(x_{3},x_{4},x_{5})=x_{3},\\
\vspace{2mm}\psi_{10}(x_{2},x_{4},x_{5})=x_{2},\\
\vspace{2mm}\psi_{10}(x_{1},x_{4},x_{5})=2x_{2},\\
\vspace{2mm}\psi_{10}(x_{2},x_{3},x_{1}^{*})=-x_{4}^{*},\\
\vspace{2mm}\psi_{10}(x_{3},x_{4},x_{1}^{*})=-x_{2}^{*},\\
\vspace{2mm}\psi_{10}(x_{2},x_{4},x_{1}^{*})=x_{3}^{*},\\
\vspace{2mm}\psi_{10}(x_{3},x_{4},x_{3}^{*})=-x_{5}^{*},\\
\vspace{2mm}\psi_{10}(x_{4},x_{5},x_{3}^{*})=-x_{3}^{*},\\
\vspace{2mm}\psi_{10}(x_{3},x_{5},x_{3}^{*})=x_{4}^{*},\\
\vspace{2mm}\psi_{10}(x_{2},x_{4},x_{2}^{*})=-x_{5}^{*},\\
\vspace{2mm}\psi_{10}(x_{4},x_{5},x_{2}^{*})=-x_{2}^{*},\\
\vspace{2mm}\psi_{10}(x_{2},x_{5},x_{2}^{*})=x_{4}^{*},\\
\vspace{2mm}\psi_{10}(x_{1},x_{4},x_{2}^{*})=-2x_{5}^{*},\\
\vspace{2mm}\psi_{10}(x_{4},x_{5},x_{2}^{*})=-2x_{1}^{*},\\
\vspace{2mm}\psi_{10}(x_{1},x_{5},x_{2}^{*})=2x_{4}^{*}.
\end{array}\right.
\end{array}
\begin{array}{llllllllll}
   \left\{\begin{array}{l}
\vspace{2mm}\widetilde{\Delta}^{10}(x_{1}^{*})=x_{3}^{*}\wedge x_{2}^{*}\wedge x_{4}^{*},\\
\vspace{2mm}\widetilde{\Delta}^{10}(x_{2}^{*})=x_{4}^{*}\wedge x_{2}^{*}\wedge x_{5}^{*}+2x_{4}^{*}\wedge x_{1}^{*}\wedge x_{5}^{*},\\
\vspace{2mm}\widetilde{\Delta}^{10}(x_{3}^{*})=x_{4}^{*}\wedge x_{3}^{*}\wedge x_{5}^{*},\\
\vspace{2mm}\widetilde{\Delta}^{10}(x_{1})=2x_{2}\wedge x_{4}^{*}\wedge x_{5}^{*}\\
\vspace{2mm}\widetilde{\Delta}^{10}(x_{2})=x_{1}\wedge x_{3}^{*}\wedge x_{4}^{*}+x_{2}\wedge x_{4}^{*}\wedge x_{5}^{*},\\
\vspace{2mm}\widetilde{\Delta}^{10}(x_{3})=x_{1}\wedge x_{4}^{*}\wedge x_{2}^{*}+ x_{3}\wedge x_{4}^{*}\wedge x_{5}^{*},\\
\vspace{2mm}\widetilde{\Delta}^{10}(x_{4})=x_{1}\wedge x_{2}^{*}\wedge x_{3}^{*}+x_{3}\wedge x_{5}^{*}\wedge x_{3}^{*},\\
\vspace{2mm}\hspace{1.5cm}+x_{2}\wedge x_{5}^{*}\wedge x_{2}^{*}+2x_{2}\wedge x_{5}^{*}\wedge x_{1}^{*}\\
\vspace{2mm}\widetilde{\Delta}^{10}(x_{5})=x_{3}\wedge x_{3}^{*}\wedge x_{4}^{*}+x_{2}\wedge x_{2}^{*}\wedge x_{4}^{*}\\
\vspace{2mm}\hspace{1.5cm}+2x_{2}\wedge x_{1}^{*}\wedge x_{4}^{*},\\
\vspace{2mm}\widetilde{\Delta}^{10}(x_{4}^{*})=\widetilde{\Delta}^{10}(x_{5}^{*})=0.\\
\end{array}\right.
\end{array}
\end{equation}
\begin{equation}
\begin{array}{llll}
   \left\{\begin{array}{l}
\vspace{2mm}\psi_{11}(x_{2},x_{3},x_{4})=x_{1},\\
\vspace{2mm}\psi_{11}(x_{1},x_{3},x_{4})=x_{2},\\
\vspace{2mm}\psi_{11}(x_{1},x_{2},x_{4})=x_{3},\\
\vspace{2mm}\psi_{11}(x_{2},x_{3},x_{1}^{*})=-x_{4}^{*},\\
\vspace{2mm}\psi_{11}(x_{3},x_{4},x_{1}^{*})=-x_{2}^{*},\\
\vspace{2mm}\psi_{11}(x_{2},x_{4},x_{1}^{*})=x_{3}^{*},\\
\vspace{2mm}\psi_{11}(x_{1},x_{3},x_{2}^{*})=-x_{4}^{*},\\
\vspace{2mm}\psi_{11}(x_{3},x_{4},x_{2}^{*})=-x_{1}^{*},\\
\vspace{2mm}\psi_{11}(x_{1},x_{4},x_{2}^{*})=x_{3}^{*},\\
\vspace{2mm}\psi_{11}(x_{1},x_{4},x_{3}^{*})=x_{2}^{*},\\
\vspace{2mm}\psi_{11}(x_{1},x_{2},x_{3}^{*})=-x_{4}^{*},\\
\vspace{2mm}\psi_{11}(x_{2},x_{4},x_{3}^{*})=-x_{1}^{*}.
\end{array}\right.
\end{array}
\begin{array}{llllllllll}
   \left\{\begin{array}{l}
\vspace{2mm}\widetilde{\Delta}^{11}(x_{1}^{*})=x_{3}^{*}\wedge x_{2}^{*}\wedge x_{4}^{*},\\
\vspace{2mm}\widetilde{\Delta}^{11}(x_{2}^{*})=x_{3}^{*}\wedge x_{1}^{*}\wedge x_{4}^{*},\\
\vspace{2mm}\widetilde{\Delta}^{11}(x_{3}^{*})=x_{2}^{*}\wedge x_{1}^{*}\wedge x_{4}^{*},\\
\vspace{2mm}\widetilde{\Delta}^{11}(x_{1})=x_{2}\wedge x_{3}^{*}\wedge x_{4}^{*}+x_{3}\wedge x_{2}^{*}\wedge x_{4}^{*},\\
\vspace{2mm}\widetilde{\Delta}^{11}(x_{2})=x_{1}\wedge x_{3}^{*}\wedge x_{4}^{*}+x_{3}\wedge x_{4}^{*}\wedge x_{1}^{*},\\
\vspace{2mm}\widetilde{\Delta}^{11}(x_{3})=x_{1}\wedge x_{4}^{*}\wedge x_{2}^{*}+ x_{2}\wedge x_{4}^{*}\wedge x_{1}^{*},\\
\vspace{2mm}\widetilde{\Delta}^{11}(x_{4})=x_{1}\wedge x_{2}^{*}\wedge x_{3}^{*}+x_{2}\wedge x_{1}^{*}\wedge x_{3}^{*}\\
\vspace{2mm}\hspace{1.5cm}+x_{3}\wedge x_{1}^{*}\wedge x_{2}^{*},\\
\vspace{2mm}\widetilde{\Delta}^{11}(x_{4}^{*})=\widetilde{\Delta}^{11}(x_{5}^{*})=0,\\
\vspace{2mm}\widetilde{\Delta}^{11}(x_{5})=0.
\end{array}\right.
\end{array}
\end{equation}
\begin{equation}
\begin{array}{llll}
   \left\{\begin{array}{l}
\vspace{2mm}\psi_{12}(x_{1},x_{2},x_{5})=x_{1},\\
\vspace{2mm}\psi_{12}(x_{2},x_{4},x_{5})=x_{3},\\
\vspace{2mm}\psi_{12}(x_{3},x_{4},x_{5})=\beta x_{2}\\
\vspace{2mm}\hspace{1.5cm}+(1+\beta)x_{3},\\
\vspace{2mm}\psi_{12}(x_{1},x_{4},x_{1}^{*})=-x_{5}^{*},\\
\vspace{2mm}\psi_{12}(x_{4},x_{5},x_{1}^{*})=-x_{1}^{*},\\
\vspace{2mm}\psi_{12}(x_{1},x_{5},x_{1}^{*})=x_{4}^{*},\\
\vspace{2mm}\psi_{12}(x_{2},x_{4},x_{3}^{*})=-x_{5}^{*},\\
\vspace{2mm}\psi_{12}(x_{4},x_{5},x_{3}^{*})=-x_{2}^{*},\\
\vspace{2mm}\psi_{12}(x_{2},x_{5},x_{3}^{*})=x_{4}^{*},\\
\vspace{2mm}\psi_{12}(x_{3},x_{4},x_{2}^{*})=-\beta x_{5}^{*},\\
\vspace{2mm}\psi_{12}(x_{4},x_{5},x_{2}^{*})=-\beta x_{3}^{*},\\
\vspace{2mm}\psi_{12}(x_{3},x_{5},x_{2}^{*})=\beta x_{4}^{*},\\
\vspace{2mm}\psi_{12}(x_{3},x_{4},x_{3}^{*})=-(1+\beta)x_{5}^{*},\\
\vspace{2mm}\psi_{12}(x_{4},x_{5},x_{3}^{*})=-(1+\beta)x_{3}^{*},\\
\vspace{2mm}\psi_{12}(x_{3},x_{5},x_{3}^{*})=(1+\beta)x_{4}^{*}.
\end{array}\right.
\end{array}
\begin{array}{llllllllll}
   \left\{\begin{array}{l}
\vspace{2mm}\widetilde{\Delta}^{12}(x_{1}^{*})=x_{4}^{*}\wedge x_{1}^{*}\wedge x_{5}^{*},\\
\vspace{2mm}\widetilde{\Delta}^{12}(x_{2}^{*})=\beta x_{4}^{*}\wedge x_{3}^{*}\wedge x_{5}^{*},\\
\vspace{2mm}\widetilde{\Delta}^{12}(x_{3}^{*})=x_{4}^{*}\wedge x_{2}^{*}\wedge x_{5}^{*}\\
\vspace{2mm}\hspace{1.5cm}+(1+\beta)x_{4}^{*}\wedge x_{3}^{*}\wedge x_{5}^{*},\\
\vspace{2mm}\widetilde{\Delta}^{12}(x_{1})=x_{1}\wedge x_{4}^{*}\wedge x_{5}^{*},\\
\vspace{2mm}\widetilde{\Delta}^{12}(x_{2})=x_{3}\wedge x_{4}^{*}\wedge x_{5}^{*},\\
\vspace{2mm}\widetilde{\Delta}^{12}(x_{3})=\beta x_{2}\wedge x_{4}^{*}\wedge x_{5}^{*}\\
\vspace{2mm}\hspace{1.5cm}+ (1+\beta)x_{3}\wedge x_{4}^{*}\wedge x_{5}^{*},\\
\vspace{2mm}\widetilde{\Delta}^{12}(x_{4})=x_{1}\wedge x_{5}^{*}\wedge x_{1}^{*}+x_{3}\wedge x_{5}^{*}\wedge x_{2}^{*}\\
\vspace{2mm}\hspace{1.5cm}+\beta x_{2}\wedge x_{5}^{*}\wedge x_{3}^{*}\\
\vspace{2mm}\hspace{1.5cm}+(1+\beta)x_{3}\wedge x_{5}^{*}\wedge x_{3}^{*},\\
\vspace{2mm}\widetilde{\Delta}^{12}(x_{5})=x_{1}\wedge x_{1}^{*}\wedge x_{4}^{*}+x_{3}\wedge x_{2}^{*}\wedge x_{4}^{*}\\
\vspace{2mm}\hspace{1.5cm}+\beta x_{2}\wedge x_{3}^{*}\wedge x_{4}^{*}\\
\vspace{2mm}\hspace{1.5cm}+(1+\beta)x_{3}\wedge x_{3}^{*}\wedge x_{4}^{*},\\
\vspace{2mm}\widetilde{\Delta}^{12}(x_{4}^{*})=\widetilde{\Delta}^{12}(x_{5}^{*})=0.\\
\end{array}\right.
\end{array}
\end{equation}
\begin{equation}
\begin{array}{llll}
   \left\{\begin{array}{l}
\vspace{2mm}\psi_{13}(x_{1},x_{4},x_{5})=x_{1},\\
\vspace{2mm}\psi_{13}(x_{2},x_{4},x_{5})=x_{2},\\
\vspace{2mm}\psi_{13}(x_{3},x_{4},x_{5})=x_{3},\\
\vspace{2mm}\psi_{13}(x_{1},x_{4},x_{1}^{*})=-x_{5}^{*},\\
\vspace{2mm}\psi_{13}(x_{4},x_{5},x_{1}^{*})=-x_{1}^{*},\\
\vspace{2mm}\psi_{13}(x_{1},x_{5},x_{1}^{*})=x_{4}^{*},\\
\vspace{2mm}\psi_{13}(x_{2},x_{4},x_{2}^{*})=-x_{5}^{*},\\
\vspace{2mm}\psi_{13}(x_{4},x_{5},x_{2}^{*})=-x_{2}^{*},\\
\vspace{2mm}\psi_{13}(x_{2},x_{5},x_{2}^{*})=x_{4}^{*},\\
\vspace{2mm}\psi_{13}(x_{3},x_{4},x_{3}^{*})=-x_{5}^{*},\\
\vspace{2mm}\psi_{13}(x_{4},x_{5},x_{3}^{*})=-x_{3}^{*},\\
\vspace{2mm}\psi_{13}(x_{3},x_{5},x_{3}^{*})=x_{4}^{*}.
\end{array}\right.
\end{array}
\begin{array}{llllllllll}
   \left\{\begin{array}{l}
\vspace{2mm}\widetilde{\Delta}^{13}(x_{1}^{*})=x_{4}^{*}\wedge x_{1}^{*}\wedge x_{5}^{*},\\
\vspace{2mm}\widetilde{\Delta}^{13}(x_{2}^{*})=x_{4}^{*}\wedge x_{2}^{*}\wedge x_{5}^{*},\\
\vspace{2mm}\widetilde{\Delta}^{13}(x_{3}^{*})=x_{4}^{*}\wedge x_{3}^{*}\wedge x_{5}^{*},\\
\vspace{2mm}\widetilde{\Delta}^{13}(x_{1})=x_{1}\wedge x_{4}^{*}\wedge x_{5}^{*},\\
\vspace{2mm}\widetilde{\Delta}^{13}(x_{2})=x_{2}\wedge x_{4}^{*}\wedge x_{5}^{*},\\
\vspace{2mm}\widetilde{\Delta}^{13}(x_{3})=x_{3}\wedge x_{4}^{*}\wedge x_{5}^{*},\\
\vspace{2mm}\widetilde{\Delta}^{13}(x_{4})= x_{5}^{*}\wedge x_{1}^{*}\wedge x_{1}+ x_{5}^{*}\wedge x_{2}^{*}\wedge x_{2}\\
\vspace{2mm}\hspace{1.5cm}+ x_{5}^{*}\wedge x_{3}^{*}\wedge x_{3},\\
\vspace{2mm}\widetilde{\Delta}^{13}(x_{5})=x_{1}^{*}\wedge x_{4}^{*}\wedge x_{1} +x_{2}^{*}\wedge x_{4}^{*}\wedge x_{2}\\
\vspace{2mm}\hspace{1.5cm}+ x_{3}^{*}\wedge x_{4}^{*}\wedge x_{3},\\
\vspace{2mm}\widetilde{\Delta}^{13}(x_{4}^{*})=\widetilde{\Delta}^{13}(x_{5}^{*})=0.
\end{array}\right.
\end{array}
\end{equation}
\begin{equation}
\begin{array}{llll}
   \left\{\begin{array}{l}
\vspace{2mm}\psi_{14}(x_{1},x_{4},x_{5})=x_{2},\\
\vspace{2mm}\psi_{14}(x_{2},x_{4},x_{5})=x_{3},\\
\vspace{2mm}\psi_{14}(x_{3},x_{4},x_{5})=sx_{1}\\
\vspace{2mm}+tx_{2}+ux_{3},\\
\vspace{2mm}\psi_{14}(x_{1},x_{4},x_{2}^{*})=-x_{5}^{*},\\
\vspace{2mm}\psi_{14}(x_{4},x_{5},x_{2}^{*})=-x_{1}^{*},\\
\vspace{2mm}\psi_{14}(x_{1},x_{5},x_{2}^{*})=x_{4}^{*},\\
\vspace{2mm}\psi_{14}(x_{2},x_{4},x_{3}^{*})=-x_{5}^{*},\\
\vspace{2mm}\psi_{14}(x_{4},x_{5},x_{3}^{*})=-x_{2}^{*},\\
\vspace{2mm}\psi_{14}(x_{2},x_{5},x_{3}^{*})=x_{4}^{*},\\
\vspace{2mm}\psi_{14}(x_{3},x_{4},x_{1}^{*})=-sx_{5}^{*},\\
\vspace{2mm}\psi_{14}(x_{4},x_{5},x_{1}^{*})=-sx_{3}^{*},\\
\vspace{2mm}\psi_{14}(x_{3},x_{5},x_{1}^{*})=sx_{4}^{*},\\
\vspace{2mm}\psi_{14}(x_{3},x_{4},x_{2}^{*})=-tx_{5}^{*},\\
\vspace{2mm}\psi_{14}(x_{4},x_{5},x_{2}^{*})=-tx_{3}^{*},\\
\vspace{2mm}\psi_{14}(x_{3},x_{5},x_{2}^{*})=tx_{4}^{*},\\
\vspace{2mm}\psi_{14}(x_{3},x_{4},x_{3}^{*})=-ux_{5}^{*},\\
\vspace{2mm}\psi_{14}(x_{4},x_{5},x_{3}^{*})=-ux_{3}^{*},\\
\vspace{2mm}\psi_{14}(x_{3},x_{5},x_{3}^{*})=ux_{4}^{*},\\
\end{array}\right.
\end{array}
\begin{array}{llllllllll}
   \left\{\begin{array}{l}
\vspace{2mm}\widetilde{\Delta}^{14}(x_{1}^{*})=sx_{4}^{*}\wedge x_{3}^{*}\wedge x_{5}^{*},\\
\vspace{2mm}\widetilde{\Delta}^{14}(x_{2}^{*})=x_{4}^{*}\wedge x_{1}^{*}\wedge x_{5}^{*}+tx_{4}^{*}\wedge x_{3}^{*}\wedge x_{5}^{*},\\
\vspace{2mm}\widetilde{\Delta}^{14}(x_{3}^{*})=x_{4}^{*}\wedge x_{2}^{*}\wedge x_{5}^{*}+ux_{4}^{*}\wedge x_{3}^{*}\wedge x_{5}^{*},\\
\vspace{2mm}\widetilde{\Delta}^{14}(x_{1})=x_{2}\wedge x_{4}^{*}\wedge x_{5}^{*},\\
\vspace{2mm}\widetilde{\Delta}^{14}(x_{2})=x_{3}\wedge x_{4}^{*}\wedge x_{5}^{*},\\
\vspace{2mm}\widetilde{\Delta}^{14}(x_{3})=sx_{1}\wedge x_{4}^{*}\wedge x_{5}^{*}+tx_{2}\wedge x_{4}^{*}\wedge x_{5}^{*}\\
\vspace{2mm}\hspace{1.5cm}+ux_{3}\wedge x_{4}^{*}\wedge x_{5}^{*},\\
\vspace{2mm}\widetilde{\Delta}^{14}(x_{4})=x_{2}\wedge x_{5}^{*}\wedge x_{1}^{*}+x_{3}\wedge x_{5}^{*}\wedge x_{2}^{*}\\
\vspace{2mm}\hspace{1.5cm}+sx_{1}\wedge x_{5}^{*}\wedge x_{3}^{*}+tx_{2}\wedge x_{5}^{*}\wedge x_{3}^{*}\\
\vspace{2mm}\hspace{1.5cm}+ux_{3}\wedge x_{5}^{*}\wedge x_{3}^{*},\\
\vspace{2mm}\widetilde{\Delta}^{14}(x_{5})=x_{2}\wedge x_{1}^{*}\wedge x_{4}^{*}+x_{3}\wedge x_{2}^{*}\wedge x_{4}^{*}\\
\vspace{2mm}\hspace{1.5cm}+sx_{1}\wedge x_{3}^{*}\wedge x_{4}^{*}+tx_{2}\wedge x_{3}^{*}\wedge x_{4}^{*}\\
\vspace{2mm}\hspace{1.5cm}+ux_{3}\wedge x_{3}^{*}\wedge x_{4}^{*},\\
\vspace{2mm}\widetilde{\Delta}^{14}(x_{4}^{*})=\widetilde{\Delta}^{14}(x_{5}^{*})=0.\\
\end{array}\right.
\end{array}
\end{equation}
\begin{equation}
\begin{array}{llll}
   \left\{\begin{array}{l}
\vspace{2mm}\psi_{15}(x_{2},x_{3},x_{4})=x_{1},\\
\vspace{2mm}\psi_{15}(x_{1},x_{3},x_{4})=x_{2},\\
\vspace{2mm}\psi_{15}(x_{2},x_{3},x_{1}^{*})=-x_{4}^{*},\\
\vspace{2mm}\psi_{15}(x_{3},x_{4},x_{1}^{*})=-x_{2}^{*},\\
\vspace{2mm}\psi_{15}(x_{2},x_{4},x_{1}^{*})=x_{3}^{*},\\
\vspace{2mm}\psi_{15}(x_{1},x_{3},x_{2}^{*})=-x_{4}^{*},\\
\vspace{2mm}\psi_{15}(x_{3},x_{4},x_{2}^{*})=-x_{1}^{*},\\
\vspace{2mm}\psi_{15}(x_{1},x_{4},x_{2}^{*})=x_{3}^{*}.
\end{array}\right.
\end{array}
\begin{array}{llllllllll}
   \left\{\begin{array}{l}
\vspace{2mm}\widetilde{\Delta}^{15}(x_{1}^{*})=x_{2}^{*}\wedge x_{4}^{*}\wedge x_{3}^{*},\\
\vspace{2mm}\widetilde{\Delta}^{15}(x_{2}^{*})=x_{1}^{*}\wedge x_{4}^{*}\wedge x_{3}^{*},\\
\vspace{2mm}\widetilde{\Delta}^{15}(x_{1})=x_{2}\wedge x_{3}^{*}\wedge x_{4}^{*},\\
\vspace{2mm}\widetilde{\Delta}^{15}(x_{2})=x_{1}\wedge x_{3}^{*}\wedge x_{4}^{*},\\
\vspace{2mm}\widetilde{\Delta}^{15}(x_{3})=x_{1}\wedge x_{4}^{*}\wedge x_{2}^{*}+x_{1}^{*}\wedge x_{2}\wedge x_{4}^{*},\\
\vspace{2mm}\widetilde{\Delta}^{15}(x_{4})=x_{1}\wedge x_{2}^{*}\wedge x_{3}^{*}+x_{1}^{*}\wedge x_{3}^{*}\wedge x_{2},\\
\vspace{2mm}\widetilde{\Delta}^{15}(x_{3}^{*})=\widetilde{\Delta}^{15}(x_{4}^{*})=0,\\
\vspace{2mm}\widetilde{\Delta}^{15}(x_{5})=\widetilde{\Delta}^{15}(x_{5}^{*})=0.\\
\end{array}\right.
\end{array}
\end{equation}
\begin{equation}
\begin{array}{llll}
   \left\{\begin{array}{l}
\vspace{2mm}\psi_{16}(x_{2},x_{3},x_{4})=-x_{2},\\
\vspace{2mm}\psi_{16}(x_{1},x_{3},x_{4})=x_{1},\\
\vspace{2mm}\psi_{16}(x_{1},x_{2},x_{3})=x_{3},\\
\vspace{2mm}\psi_{16}(x_{1},x_{2},x_{4})=-x_{4},\\
\vspace{2mm}\psi_{16}(x_{2},x_{3},x_{2}^{*})=x_{4}^{*},\\
\vspace{2mm}\psi_{16}(x_{3},x_{4},x_{2}^{*})=x_{2}^{*},\\
\vspace{2mm}\psi_{16}(x_{2},x_{4},x_{2}^{*})=-x_{3}^{*},\\
\vspace{2mm}\psi_{16}(x_{1},x_{3},x_{1}^{*})=-x_{4}^{*},\\
\vspace{2mm}\psi_{16}(x_{3},x_{4},x_{1}^{*})=-x_{1}^{*},\\
\vspace{2mm}\psi_{16}(x_{1},x_{4},x_{1}^{*})=x_{3}^{*},\\
\vspace{2mm}\psi_{16}(x_{1},x_{2},x_{3}^{*})=-x_{3}^{*},\\
\vspace{2mm}\psi_{16}(x_{1},x_{3},x_{3}^{*})=x_{2}^{*},\\
\vspace{2mm}\psi_{16}(x_{2},x_{3},x_{3}^{*})=-x_{1}^{*},\\
\vspace{2mm}\psi_{16}(x_{1},x_{2},x_{4}^{*})=x_{4}^{*},\\
\vspace{2mm}\psi_{16}(x_{1},x_{4},x_{4}^{*})=-x_{2}^{*},\\
\vspace{2mm}\psi_{16}(x_{2},x_{4},x_{4}^{*})=x_{1}^{*}.
\end{array}\right.
\end{array}
\begin{array}{llllllll}
   \left\{\begin{array}{l}
\vspace{2mm}\widetilde{\Delta}^{16}(x_{1}^{*})=x_{1}^{*}\wedge x_{4}^{*}\wedge x_{3}^{*},\\
\vspace{2mm}\widetilde{\Delta}^{16}(x_{2}^{*})=x_{2}^{*}\wedge x_{3}^{*}\wedge x_{4}^{*},\\
\vspace{2mm}\widetilde{\Delta}^{16}(x_{3}^{*})=x_{1}^{*}\wedge x_{3}^{*}\wedge x_{2}^{*},\\
\vspace{2mm}\widetilde{\Delta}^{16}(x_{4}^{*})=x_{1}^{*}\wedge x_{2}^{*}\wedge x_{4}^{*},\\
\vspace{2mm}\widetilde{\Delta}^{16}(x_{1})=x_{1}\wedge x_{3}^{*}\wedge x_{4}^{*}+x_{2}^{*}\wedge x_{3}^{*}\wedge x_{3}\\
\vspace{2mm}\hspace{1.5cm}+x_{4}^{*}\wedge x_{2}^{*}\wedge x_{4},\\
\vspace{2mm}\widetilde{\Delta}^{16}(x_{2})=x_{3}\wedge x_{3}^{*}\wedge x_{1}^{*}+x_{4}^{*}\wedge x_{3}^{*}\wedge x_{2}\\
\vspace{2mm}\hspace{1.5cm}+x_{4}\wedge x_{1}^{*}\wedge x_{4}^{*},\\
\vspace{2mm}\widetilde{\Delta}^{16}x_{3})=x_{1}^{*}\wedge x_{2}^{*}\wedge x_{3}+x_{2}^{*}\wedge x_{4}^{*}\wedge x_{2}\\
\vspace{2mm}\hspace{1.5cm}+x_{4}^{*}\wedge x_{1}^{*}\wedge x_{1},\\
\vspace{2mm}\widetilde{\Delta}^{16}(x_{4})=x_{1}^{*}\wedge x_{4}\wedge x_{2}^{*}+x_{1}\wedge x_{1}^{*}\wedge x_{3}^{*}\\
\vspace{2mm}\hspace{1.5cm}+x_{2}\wedge x_{3}^{*}\wedge x_{2}^{*},\\
\vspace{2mm}\widetilde{\Delta}^{16}(x_{5}^{*})=\widetilde{\Delta}^{16}(x_{5})=0.
\end{array}\right.
\end{array}
\end{equation}
\begin{equation}
\begin{array}{llll}
   \left\{\begin{array}{l}
\vspace{2mm}\psi_{17}(x_{1},x_{2},x_{3})=x_{1},\\
\vspace{2mm}\psi_{17}(x_{1},x_{2},x_{1}^{*})=-x_{3}^{*},\\
\vspace{2mm}\psi_{17}(x_{2},x_{3},x_{1}^{*})=-x_{1}^{*},\\
\vspace{2mm}\psi_{17}(x_{1},x_{3},x_{1}^{*})=x_{2}^{*}.
\end{array}\right.
\end{array}
\begin{array}{llllllllll}
   \left\{\begin{array}{l}
\vspace{2mm}\widetilde{\Delta}^{17}(x_{1}^{*})=x_{1}^{*}\wedge x_{3}^{*}\wedge x_{2}^{*},\\
\vspace{2mm}\widetilde{\Delta}^{17}(x_{1})=x_{1}\wedge x_{2}^{*}\wedge x_{3}^{*},\\
\vspace{2mm}\widetilde{\Delta}^{17}(x_{2})= x_{3}^{*}\wedge x_{1}^{*}\wedge x_{1},\\
\vspace{2mm}\widetilde{\Delta}^{17}(x_{3})=x_{1}^{*}\wedge x_{2}^{*}\wedge x_{1},\\
\vspace{2mm}\widetilde{\Delta}^{17}(x_{2}^{*})=\widetilde{\Delta}^{17}(x_{3}^{*})=\widetilde{\Delta}^{17}(x_{4}^{*})=0,\\
\vspace{2mm}\widetilde{\Delta}^{17}(x_{5}^{*})=\widetilde{\Delta}^{17}(x_{4})=\widetilde{\Delta}^{17}(x_{5})=0.
\end{array}\right.
\end{array}
\end{equation}

\end{theorem}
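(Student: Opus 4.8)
\noindent\textit{Proof proposal.} The plan is to proceed exactly as in the proof of Theorem \ref{thm:41}, replacing the four-dimensional classification by the five-dimensional one in Lemma \ref{lem:5}. The skeleton has four steps. (i) For each isomorphism type $A$ in Lemma \ref{lem:5} satisfying the hypothesis $\dim A^1\le 3$, or $\dim A^1=4$ with $Z(A)\ne 0$, exhibit an involutive derivation $D\in Der(A)$ together with the eigenspace decomposition $A=A_1\dot+A_{-1}$ of Eq. \eqref{eq:23}. (ii) Invoke Theorem \ref{thm:888} to conclude that $r=\overline D-\sigma_{12}\overline D$, written explicitly as in Eq. \eqref{eq:r*}, is a skew-symmetric solution of {\bf CYBE} in the semi-direct product $A\ltimes_{ad^*}A^*$. (iii) Invoke Theorem \ref{thm:666}, equivalently Corollary \ref{cor:666}, to obtain from $r$ the local cocycle $3$-Lie bialgebra $(A\ltimes_{ad^*}A^*,\Delta)$. (iv) In the fixed basis $\{x_1,\dots,x_5,x_1^*,\dots,x_5^*\}$, compute the semi-direct product bracket $\psi_j$ from Eq. \eqref{eq:semiprod} and the map $\widetilde\Delta^j=\widetilde\Delta^j_1+\widetilde\Delta^j_2+\widetilde\Delta^j_3$ from Eq. \eqref{eq:999}, using $\widetilde\Delta^j_2=\phi_{13}\phi_{12}\widetilde\Delta^j_1$ and $\widetilde\Delta^j_3=\phi_{12}\phi_{13}\widetilde\Delta^j_1$, and check that the outcome is the displayed pair $(\psi_j,\widetilde\Delta^j)$ for $1\le j\le 17$.

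\noindent For step (i) I would run through the types $(b_1),(b_2),(c_1),\dots,(c_7),(d_1),\dots,(d_7),(e_1),(e_2)$ of Lemma \ref{lem:5} one at a time. When $\dim A^1=1$ (e.g.\ $(b_1),(b_2)$) the derived algebra is spanned by a single basis vector, and one simply takes $D$ to rescale basis vectors by $\pm 1$ so that the defining relation is preserved. When $\dim A^1=2,3$ one solves the linear system \eqref{eq:der} for a diagonalizable $D$ with $D^2=I_d$; in each surviving case this has a solution. For the parametrized families $(c_5),(c_6),(d_2),(d_5),(d_7)$ one chooses $D$ depending on the parameters, restricting the parameter range where $D^2=I_d$ forces a constraint; this is exactly where the hypotheses on $\dim A^1$ and on $Z(A)$ enter, since the excluded five-dimensional algebras are precisely those admitting no involutive derivation, which one reads off from the classification of $Der(A)$. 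Once $D$ and $A=A_1\dot+A_{-1}$ are produced, substituting into Eq. \eqref{eq:r*} gives $r=\sum_i u_i\otimes\nu_i$ explicitly; then steps (ii)--(iv) are mechanical, since $\psi_j$ is just $[\cdot,\cdot,\cdot]$ together with the $ad^*$-action determined by Eq. \eqref{eq:dualrep}, and $\widetilde\Delta^j_1$ is the finite sum in Eq. \eqref{eq:999}, each term of which is evaluated on the chosen basis.

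\noindent The main obstacle is not a single deep point but the volume and bookkeeping of step (i): one must decide, for every member of the classification, whether an involutive derivation exists and, if so, write one down in the given basis, handle the parametrized families, and verify that the seventeen surviving cases are exactly those listed. A secondary, purely computational, difficulty is that for the types with larger derived algebra the coproduct $\widetilde\Delta^j$ has many summands, and the operators $\phi_{13}\phi_{12}$, $\phi_{12}\phi_{13}$ must be applied carefully so that the result lands in $A^{\wedge 3}$; a convenient internal check at each stage is that $(A^*,(\widetilde\Delta^j)^*)$ is again a $3$-Lie algebra, which is guaranteed by Theorem \ref{thm:666} but gives an independent test of the arithmetic. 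After step (i) the remainder is the same computation performed seventeen times, exactly as the single worked case $(b_1)$ in the proof of Theorem \ref{thm:41}, so I would present one representative case in full and state that the remaining cases follow by the identical procedure.
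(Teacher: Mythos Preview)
Your proposal is correct and takes essentially the same approach as the paper: the paper's proof is the single sentence ``Apply Lemma \ref{lem:4}, and Eq~\eqref{eq:semiprod}, and Theorem \ref{thm:666} and Theorem \ref{thm:888}'', and your four steps (i)--(iv) are precisely an unpacking of that sentence along the lines of the worked case $(b_1)$ in the proof of Theorem \ref{thm:41}. One small caution: your claim that ``the excluded five-dimensional algebras are precisely those admitting no involutive derivation'' is stronger than what the paper asserts or needs; Theorem \ref{thm:45dim} only guarantees existence of an involutive derivation under the stated hypotheses, not the converse, so for the present theorem you only need the forward direction.
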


\begin{proof}
Apply  Lemma \ref{lem:4}, and Eq\eqref{eq:semiprod}, and Theorem \ref{thm:666} and  Theorem \ref{thm:888}.
\end{proof}

\subsection*{Acknowledgements}
The first named author was supported in part by the Natural
Science Foundation (11371245) and the Natural
\normalsize
Science Foundation of Hebei Province (A2018201126).

\end{document}